\documentclass[a4paper]{article}
\usepackage[left=20mm, right=20mm, top=20mm, bottom=30mm]{geometry}
\usepackage{amsmath, amssymb, amsthm}
\usepackage{mathtools}
\usepackage{bm}
\usepackage{graphicx}
\usepackage{hyperref}
\usepackage{enumitem}
\usepackage{color}
\usepackage{orcidlink}
\usepackage{thmtools}
\usepackage{cleveref}
\usepackage{booktabs}
\usepackage{multirow}
\usepackage{cellspace}
\usepackage{makecell}
\usepackage{float}

\hypersetup{
    pdftitle={Overcoming slow Kolmogorov width decay in parametric optimal control via neural network surrogates},
    pdfauthor={Hendrik Kleikamp, Martin Lazar, Juan Ricardo Mu\~{n}oz},
    pdfkeywords={Parametrized optimal control problems, Kolmogorov barrier, nonlinear surrogates, U-Net architecture, a posteriori error estimation}
}

\usepackage{tikz}
\usetikzlibrary{arrows.meta}

\usepackage{pgfplots}
\pgfplotsset{compat=1.18}
\usepgfplotslibrary{groupplots}

\definecolor{viridisYellow}{RGB}{253,231,37}
\definecolor{viridisGreen}{RGB}{94,201,98}
\definecolor{viridisTeal}{RGB}{33,145,140}
\definecolor{viridisBlue}{RGB}{59,82,139}
\definecolor{viridisViolet}{RGB}{68,1,84}
\definecolor{matchingRed}{HTML}{D01F3C}
\definecolor{matchingOrange}{HTML}{FFC077}

\colorlet{unet}{matchingRed}
\colorlet{paramdec}{viridisYellow}
\colorlet{cnnaerom}{viridisBlue}
\colorlet{rbrom}{viridisViolet}
\colorlet{cnnae}{viridisGreen}
\colorlet{orthproj}{matchingOrange}

\usepackage{authblk}
\usepackage[maxbibnames=99, backend=bibtex]{biblatex}
\def\ph{{\varphi}}

\def\R{{\mathbb{R}}}

\def\mA{{\mathbf{A}}}
\def\mI{{\mathbf{I}}}
 
\def\lA{{\mathcal{A}}}
\def\lH{{\mathcal{H}}}
\def\lM{{\mathcal{M}}}

\newcommand{\params}{{\mathcal{P}}}

\hypersetup{
    colorlinks=true,
    linkcolor=blue,
    citecolor=blue,
    urlcolor=blue
}

\newtheorem{theorem}{Theorem}[section]

\newtheorem{assumption}[theorem]{Assumption}

\newtheorem{corollary}[theorem]{Corollary}

\theoremstyle{remark}
\newtheorem{remark}[theorem]{Remark}
\Crefname{remark}{Remark}{Remarks}

\renewcommand{\d}[1]{\,\mathrm{d}#1}

\DeclareMathOperator*{\argmin}{arg\,min}

\newcommand{\cnnlayer}[7]{
    \draw[fill=#7, fill opacity=0.25]
        (#1-#4/2.,#2-#5/2.,#3+#6/2.) -- (#1+#4/2.,#2-#5/2.,#3+#6/2.) -- (#1+#4/2.,#2+#5/2.,#3+#6/2.) -- (#1-#4/2.,#2+#5/2.,#3+#6/2.) -- cycle
        (#1-#4/2.,#2+#5/2.,#3+#6/2.) -- (#1+#4/2.,#2+#5/2.,#3+#6/2.) -- (#1+#4/2.,#2+#5/2.,#3-#6/2.) -- (#1-#4/2.,#2+#5/2.,#3-#6/2.) -- cycle
        (#1+#4/2.,#2-#5/2.,#3-#6/2.) edge[dashed] (#1-#4/2.,#2-#5/2.,#3-#6/2.)
        (#1-#4/2.,#2-#5/2.,#3-#6/2.) edge[dashed] (#1-#4/2.,#2+#5/2.,#3-#6/2.)
        (#1-#4/2.,#2-#5/2.,#3-#6/2.) edge[dashed] (#1-#4/2.,#2-#5/2.,#3+#6/2.);
}

\newcommand{\cnnlayerend}[7]{
    \draw[fill=#7, fill opacity=0.25]
        (#1-#4/2.,#2-#5/2.,#3+#6/2.) -- (#1+#4/2.,#2-#5/2.,#3+#6/2.) -- (#1+#4/2.,#2+#5/2.,#3+#6/2.) -- (#1-#4/2.,#2+#5/2.,#3+#6/2.) -- cycle
        (#1+#4/2.,#2-#5/2.,#3+#6/2.) -- (#1+#4/2.,#2-#5/2.,#3-#6/2.) -- (#1+#4/2.,#2+#5/2.,#3-#6/2.) -- (#1+#4/2.,#2+#5/2.,#3+#6/2.) -- cycle
        (#1-#4/2.,#2+#5/2.,#3+#6/2.) -- (#1+#4/2.,#2+#5/2.,#3+#6/2.) -- (#1+#4/2.,#2+#5/2.,#3-#6/2.) -- (#1-#4/2.,#2+#5/2.,#3-#6/2.) -- cycle
        (#1+#4/2.,#2-#5/2.,#3-#6/2.) edge[dashed] (#1-#4/2.,#2-#5/2.,#3-#6/2.)
        (#1-#4/2.,#2-#5/2.,#3-#6/2.) edge[dashed] (#1-#4/2.,#2+#5/2.,#3-#6/2.)
        (#1-#4/2.,#2-#5/2.,#3-#6/2.) edge[dashed] (#1-#4/2.,#2-#5/2.,#3+#6/2.);
}

\newcommand{\cnnarrow}[6]{
    \draw[->,>=Latex] (#1,#2,#3) -- (#4,#5,#6);
}

\title{Overcoming slow Kolmogorov width decay in parametric optimal control via neural network surrogates\thanks{This publication is based upon work from COST Action InterCoML, supported by COST (European Cooperation in Science and Technology).}}
\author{Hendrik Kleikamp\,\orcidlink{0000-0003-1264-5941}\,\thanks{IDea\_Lab, University of Graz, Leechgasse~34, 8010~Graz, Austria, ({\tt hendrik.kleikamp@uni-graz.at}).}\quad Martin Lazar\,\orcidlink{0000-0002-4034-5770}\,\thanks{Faculty of Electrical Engineering and Applied Computing, University of Dubrovnik, Ćira Carića~4, 20~000~Dubrovnik, Croatia, ({\tt mlazar@unidu.hr}).}\quad Juan Ricardo Mu\~{n}oz\,\orcidlink{0009-0003-9788-792X}\,\thanks{Department of Mathematical Engineering, Faculty of Physical and Mathematical Sciences, Universidad de Chile, Av.~Beauchef~851, 8370458~Santiago, Chile, ({\tt juan.munoz@dim.uchile.cl}).}\textsuperscript{\, ,\,}\thanks{Faculty of Electrical Engineering and Applied Computing, University of Dubrovnik, Ćira Carića~4, 20~000~Dubrovnik, Croatia.}\textsuperscript{\, ,\,}\thanks{Corresponding author, email: {\tt juan.munoz@dim.uchile.cl}}}
\date{July 14, 2026}

\begin{document}

\maketitle

\begin{abstract}
In this paper we deal with parametric, linear-quadratic optimal control problems in which the solution can be uniquely characterized by the optimal final time adjoint state.
As a motivating example, we establish theoretical results showing that for distributed control of the heat equation, the manifold of final time adjoints over the parameter space exhibits a slow decay of its Kolmogorov width if this was already the case for the parameter-dependent target states.
Traditional linear reduced-order models would thus require a large reduced space in order to guarantee a sufficient accuracy, making them inefficient in this application.
To overcome the limitation of linear models, we discuss a nonlinear surrogate based on U-Nets that maps parametric fields to approximate final time adjoints.
We show that a suitable a posteriori error estimator remains applicable to the U-Net approximation and can be used to certify the surrogate results.
Through two extensive numerical experiments, we show the potential of the U-Net surrogate and compare it with several linear and nonlinear methods from the literature.
The results show that the U-Net consistently achieves the highest accuracy among the methods considered while requiring significantly fewer training samples.
\end{abstract}

\noindent
\textbf{Keywords: }Parametrized optimal control problems, Kolmogorov barrier, nonlinear surrogates, U-Net architecture, a posteriori error estimation
\newline
\newline
\textbf{MSC Classification: }49N10, 68T07, 34B08, 65M06, 35Q93, 35K05

\section{Introduction}
This contribution is concerned with optimal control problems that depend on a set of parameters, for instance influencing target states, physical quantities of the dynamical system or initial conditions.
Specifically, we consider linear-quadratic optimal control problems with a term penalizing deviations from the terminal target state and a term measuring the control energy over time.
The Hilbert uniqueness method~(HUM)~\cite{lions1988controlabilite1} allows us to characterize the optimal state, control, and adjoint entirely through the corresponding parameter-dependent final time adjoint.
Consequently, rather than approximating the optimal control or state directly, we focus on constructing surrogate models for the final time adjoint, from which all remaining optimal quantities can be recovered.
In many applications such as inverse problems, uncertainty quantification or parameter optimization, one is interested in solving the parametric problem for a large number of different parameter values.
Since the high-fidelity full-order model~(FOM) is typically computationally expensive, it is tremendously costly to solve the~FOM for all relevant parameters.
To make such settings feasible, one therefore resorts to surrogate or reduced-order models~(ROMs) that provide approximate solutions under significantly smaller costs.
\par
Classical linear~ROMs rely on a suitable low-dimensional subspace of the high-dimensional solution space and solve a reduced problem in the subspace.
We refer to~\cite{quarteroni2007reduced,dede2012reduced,lazar2016greedy,ballarin2022spacetime,kleikamp2025twostage} for examples of such methods in the context of parametric optimal control problems.
Combinations of linear approaches with machine learning algorithms have been considered recently as well, see~\cite{kleikamp2025greedy}.
An overview of methods for parametric control problems can be found in~\cite{JL21}.
However, the applicability of linear~ROMs is limited when it comes to problems where the solution manifold cannot be approximated well with low-dimensional subspaces, i.e.~when the Kolmogorov~$N$-width of the manifold (measuring the approximation error of the manifold with the best possible linear subspace of dimension~$N$) decays slowly in the reduced dimension~$N$.
\par
This limitation has motivated the development of nonlinear approximation techniques.
Many of these approaches employ modern machine learning architectures, such as convolutional autoencoders, to obtain nonlinear representations of the solution manifold, see for instance~\cite{fresca2021comprehensive,franco2023deep}.
As pointed out in~\cite{duriez2017machine,bertsekas2019reinforcement,zuazua2022control,cerf2023combining}, control theory and machine learning are closely related and share many ideas, which motivates the application of machine learning methods for control problems, particularly in parametric settings.
For instance, the authors in~\cite{demo2023extended} extended physics informed neural networks to parametric optimal control problems.
In~\cite{verma2025neural}, different neural network-based methods for learning the value function or the policy in a parametric setting have been proposed and compared to a model-based approach.
Deep reinforcement learning combined with dictionary learning to obtain sparse control policies has been applied to parametric~PDE control in~\cite{botteghi2024parametric}.
\par
The main contributions of our paper are twofold.
First, we analyze how structural properties of the target states, including regularity and parameter-dependent shifts, are transferred to the corresponding final time adjoints.
In particular, we show that slow Kolmogorov~$N$-width decay of the target manifold is inherited by the manifold of final time adjoints.
This explains why low-dimensional linear reduced-order models are not effective for the transport-dominated parametric problems considered in this work.
\par
As a second contribution, motivated by the theoretical analysis, we propose a nonlinear surrogate model to overcome the~Kolmogorov~$N$-width limitations.
More precisely, we consider a~U-Net architecture that approximates the solution operator associated with the final time adjoint equation by mapping parameter-dependent fields directly to the corresponding final time adjoints.
Additionally, since the surrogate approximates the final time adjoint itself, the residual-based a posteriori error estimator remains applicable and can be used to evaluate the quality of the predicted solution.
Finally, we discuss the design decisions for the neural network architecture and provide extensive numerical experiments comparing the proposed approach with several existing linear and nonlinear surrogate models on benchmark problems that are particularly challenging for traditional linear reduced-order models. 
\newline\newline
\noindent The paper is organized as follows: In~\Cref{sec:parametric-lqocp} we introduce the parametric optimal control problem considered in this work. Afterwards, we discuss in~\Cref{sec:LROM_lim} reduced basis methods and their limitations by exemplarily characterizing the Kolmogorov width of the manifold of final time adjoints in terms of the manifold of target states. We further analyze the structure of the final time adjoints for an example with moving targets and distributed controls. The~U-Net based surrogate for parametric optimal control problems is then introduced and examined in~\Cref{sec:nonlinear-surrogate}. We compare the approach to several other well-known methods from the literature, including linear reduced-order models and convolutional autoencoders, on different numerical examples, see~\Cref{sec:numerical-experiments}. As experiments, we first consider the same setting that already served as our guiding theoretical example (\Cref{ssec:HGC}). Secondly, we apply all methods including the~U-Net approach to a more complicated test case with a localized control region, a discontinuous and parameter-dependent diffusion field and a target state with strong parameter dependence (\Cref{sec:ball-example}). Finally, we conclude the paper in~\Cref{sec:conclusion-outlook} by summarizing our main findings and discussing directions for future work.

\section{Parametric linear-quadratic optimal control problems}\label{sec:parametric-lqocp}
We begin by introducing the abstract framework for a class of parametrized linear-quadratic optimal control problems.
\par
Let~$\lH$ and~$\mathcal U$ be real Hilbert spaces endowed with scalar products~$\langle\cdot,\cdot\rangle_{\lH}$ and~$\langle\cdot,\cdot\rangle_{\mathcal U}$, respectively, and associated norms~$\|\cdot\|_{\lH}$ and~$\|\cdot\|_{\mathcal U}$. When no ambiguity arises, we simply write~$\langle\cdot,\cdot\rangle$ and~$\|\cdot\|$. We denote by~$\mathcal L(\mathcal U,\lH)$ the space of bounded linear operators from~$\mathcal U$ to~$\lH$, and we set~$\mathcal L(\lH)\coloneqq\mathcal L(\lH,\lH)$. Throughout, $\lH$ is the state space and~$\mathcal U$ is the control space.
\par
Let~$\params$ be the parameter set. For each~$\mu\in\params$, we consider the linear control system
\begin{equation}\label{eq:PDOC_1}
    \left\{
    \begin{aligned}
        \dot{y}_\mu(t) &= A_\mu y_\mu(t)+B_\mu u_\mu(t) & \text{for }t\in[0,T],\\
        y_\mu(0) &= y_\mu^0,
    \end{aligned}
    \right.
\end{equation}
where~$A_\mu\colon D(A_\mu)\subset \mathcal H \to \mathcal H$ generates a~$C_0$-semigroup~$(e^{tA_\mu})_{t\geq0}$ on~$\mathcal H$, and~$B_\mu\in\mathcal L(\mathcal U,\lH)$ is a parameter-dependent control operator. Moreover, $y_\mu^0\in\lH$ is the initial datum, $y_\mu^T\in\lH$ is the target state, and~$T>0$ is the final time.
\par
We impose the following standing assumptions ensuring well-posedness for each fixed parameter.
\begin{assumption}\label{as:param_dep}
    The set~$\params$ is a compact subset of a Banach space. For each~$\mu\in\params$, $A_\mu$ generates a~$C_0$-semigroup~$(e^{tA_\mu})_{t\geq0}$ on~$\mathcal H$.  We assume that, for every~$z\in\mathcal H$, the map~$\mu\mapsto e^{tA_\mu}z$ is uniformly continuous for~$t\in[0,T]$. 
Moreover, the mappings~$\mu\mapsto B_\mu$, $\mu\mapsto y_\mu^0$ and~$\mu\mapsto y_\mu^T$ are continuous.
\end{assumption}

\begin{remark}
    We emphasize that the present framework is slightly more general than the one considered in~\cite{kleikamp2025greedy}. Assuming that each~$A_\mu$ generates a~$C_0$-semigroup, rather than requiring~$A_\mu\in\mathcal L(\mathcal H)$ and continuity of the map~$\mu\mapsto A_\mu$, allows us to also include unbounded operators in the study, such as elliptic differential operators.
\end{remark}

For any control~$u\in L^2(0,T;\mathcal U)$, we denote by~$y_\mu\colon[0,T]\to\lH$ the corresponding solution of~\eqref{eq:PDOC_1}.
Our objective is to steer the system close to the target at time~$T$, that is~$y_\mu(T)\approx y_\mu^T$.
To determine such a control, we introduce a quadratic cost functional. In this simplified setting, we penalize the final state deviation and the control energy, and we fix a scalar weight~$\alpha>0$. For each~$\mu\in\params$, we define
\begin{align*}
    \mathcal J_\mu(u)
    =
    \frac12
    \left(
    \alpha \lVert y_\mu(T)-y_\mu^T\rVert_\lH^2
    +
    \int_0^T \lVert u(t)\rVert_{\mathcal U}^2\d{t}
    \right).
\end{align*}
This functional is continuous, quadratic and strongly convex with respect to~$u$, and therefore admits a unique minimizer.
\par
With this notation, the parametrized optimal control problem reads
\begin{align}\label{eq:PDOC_2}
    \min_{u\in L^2(0,T;\mathcal U)}\ \mathcal J_\mu(u)
    \quad\text{subject to}\quad
    \dot{y}_\mu(t)=A_\mu y_\mu(t)+B_\mu u(t),
    \quad y_\mu(0)=y_\mu^0.
\end{align}
For each~$\mu\in\params$, this problem is well posed and admits a unique optimal control~$u_\mu^*$, with associated state~$y_\mu^*$.
\par
For completeness, we now recall the optimality system (see~\cite[Theorem~2.4]{kleikamp2025greedy}) associated to this problem. The optimal solution can be characterized through an adjoint state~$\varphi_\mu^*\colon[0,T]\to\lH$ satisfying
\begin{equation}\label{eq:opt_sys_a}
    \left\lbrace
    \begin{aligned}
        \dot{y}_\mu(t) &= A_\mu y_\mu(t)+B_\mu u_\mu(t),\\
        -\dot{\varphi}_\mu(t) &= A_\mu^*\varphi_\mu(t),\\
        u_\mu(t) &= -B_\mu^*\varphi_\mu(t),
    \end{aligned}
    \right.
\end{equation}
for~$t\in[0,T]$, together with
\begin{align}\label{eq:opt_sys_b}
    y_\mu(0)=y_\mu^0,
    \qquad
    \varphi_\mu(T)=\alpha \big(y_\mu(T)-y_\mu^T\big).
\end{align}
This system shows that the solution is entirely determined by the final time adjoint~$\varphi_\mu^*(T)\in\lH$. In particular, we can express all variables in terms of~$\varphi_\mu^*(T)$. First, we have
\begin{align*}
    \varphi_\mu^*(t)
    =
    e^{(T-t)A_\mu^*}\varphi_\mu^*(T),
\end{align*}
which directly yields the control
\begin{align*}
    u_\mu^*(t)
    =
    -\,B_\mu^*e^{(T-t)A_\mu^*}\varphi_\mu^*(T).
\end{align*}
Substituting this expression into the state equation, we obtain
\begin{align*}
    y_\mu^*(t)
    =
    e^{tA_\mu }y_\mu^0
    -
    \int_0^t e^{(t-s)A_\mu}B_\mu B_\mu^*e^{(T-s)A_\mu^*}\varphi_\mu^*(T)\d{s}.
\end{align*}
Evaluating at~$t=T$, we obtain
\begin{align}\label{eq:op.st.T}
    y_\mu^*(T)
    =
    e^{TA_\mu }y_\mu^0-W_\mu(T)\,\varphi_\mu^*(T),
\end{align}
where the controllability Gramian is defined by
\begin{align}\label{eq:w_gram}
    W_\mu(T)
    =
    \int_0^T e^{(T-s)A_\mu}B_\mu B_\mu^*e^{(T-s)A_\mu^*}\d{s}.
\end{align}
The operator~$W_\mu(T)\in\mathcal{L}(\lH)$ is bounded, self-adjoint, and positive semidefinite.
Indeed, for every~$v\in\lH$,
\begin{align}\label{equ:positive-semidefinite-Gramian}
    \langle W_\mu(T) v,v\rangle
    =
    \int_0^T \|B_\mu^*e^{(T-s)A_\mu^*}v\|_{\mathcal U}^2\d{s}
    \geq 0.
\end{align}
Finally, combining~\eqref{eq:op.st.T} with the terminal condition in~\eqref{eq:opt_sys_b}, we arrive at the equation for the final time adjoint:
\begin{align}\label{eq:final_adjoint_system}
    \left(I+\alpha W_\mu(T)\right)\varphi_\mu^*(T)
    =
    \alpha \left(e^{T A_\mu }y_\mu^0-y_\mu^T\right).
\end{align}
This identity shows that the parametrized optimal control problem reduces to solving, for each~$\mu\in\params$, a linear equation in~$\lH$ involving the controllability Gramian. Notice that Assumption~\ref{as:param_dep} implies that the map~$\mu\longmapsto \varphi_\mu^*(T)$ is continuous from~$\params$ into~$\mathcal H$.

\begin{remark}\label{rem:Simpl}
    Due to the linearity of the control system, without loss of generality the initial data~$y^0_\mu$ can be replaced by zero. The equivalent optimal control problem~\eqref{eq:PDOC_2}  is obtained by replacing the target~$y_\mu^T$ with~$e^{TA_\mu }y_\mu^0-y_\mu^T$.
    For this reason and in order to simplify the presentation, in the sequel we  assume~$y^0_\mu=0$.    
\end{remark}
The full-order model~(FOM) is solved by computing the terminal adjoint~$\varphi^*_\mu(T)$ from~\eqref{eq:final_adjoint_system}. For large-scale discretizations, the controllability Gramian~$W_\mu(T)$ is not assembled explicitly. Instead, its action on a vector is evaluated through forward and adjoint time integrations, and the resulting linear system is solved with the conjugate gradient~(CG) method. Each~CG iteration requires at least one Gramian application and therefore several time-stepping solves. While this procedure is feasible for a small number of parameter values, it becomes computationally demanding in a multi-query setting, where the same computation must be repeated for many parameters~$\mu\in\params$. Consequently, repeated full-order simulations and iterative linear solves become the dominant computational cost, motivating the use of surrogate models.
\par
The cost increases further in the parametric setting because the system matrix~$I+\alpha W_\mu(T)$ depends on the parameter through the underlying dynamics. As a result, a different linear system must be solved for each query. Since every~CG iteration requires one Gramian application, and each Gramian application involves the solution of one forward and one adjoint evolution problem, the dominant cost of a full-order solve is given by
\begin{align*}
    \mathcal O\!\left(n^3+m^3+n_{\mathrm{CG}}\,n_t\,n\,(n+m)\right),
\end{align*}
where~$n_{\mathrm{CG}}$ is the number of~CG iterations, $n_t$ is the number of time steps, $n$ is the state dimension, and~$m$ is the control dimension (see~\cite[Section~5.2.1]{kleikamp2025greedy} for additional details). In typical applications, it holds~$n_t\geq n$. Therefore, when many parameter queries are required, the overall computational effort scales linearly with the number of queries and at least cubic in~$n$, making the repeated solution of the full-order optimality system the main computational bottleneck.
\par
To address this issue, we consider surrogate models that approximate the optimal final time adjoint~$\varphi_\mu^*(T)$ for a given parameter~$\mu\in\params$. As discussed above, the optimal control, adjoint, and state are uniquely determined by the final time adjoint through~\eqref{eq:final_adjoint_system}. Therefore, the objective is to construct a surrogate that provides an accurate approximation of~$\varphi_\mu^*(T)$ at a significantly lower computational cost than solving the full-order system, while still allowing the accuracy of the approximation to be assessed through a posteriori error estimates.

\section{Linear reduced-order models and their limitations}\label{sec:LROM_lim}
In this section we analyze the application of linear reduced-order models when treating parametric linear-quadratic optimal control problems  of the form presented above.
We shall show that even in a very simplified model setting this approach faces severe, intrinsic limitations. This makes application of  reduced models based on linear subspaces inefficient and motivates to strive for nonlinear approaches in~\Cref{sec:nonlinear-surrogate}.

\subsection{Linear reduced basis method for approximation of final time adjoints}\label{sec:linear-reduced-model}
In this section, we briefly describe the linear reduced-order model for final time adjoints that was originally introduced in~\cite{lazar2016greedy}.
The reduced-order model will serve as a reference in our numerical experiments, see~\Cref{sec:numerical-experiments}.
\par
Let us assume we are given a (low-dimensional) reduced subspace~$\lH_N\subset\lH$ of dimension~$N\in\mathbb{N}$ with~$N\ll\dim\lH$.
We would like to approximate the optimal final time adjoint~$\varphi_\mu^*(T)\in\lH$ by a suitable reduced solution~$\tilde{\varphi}_\mu^N\in\lH_N$.
The optimal final time adjoint is characterized according to~\eqref{eq:final_adjoint_system} by
\begin{align*}
    \left(I+\alpha W_\mu(T)\right)\varphi_\mu^*(T)
    =
    \alpha  y_\mu^T,
\end{align*}
where we, without loss of generality, assume that the initial datum~$y_\mu^0$ is equal to zero (cf.~\Cref{rem:Simpl}). 
\par
In order to obtain a reasonable reduced approximation, $\tilde{\varphi}_\mu^N$ should fulfill the same linear system as accurately as possible.
The reduced solution is therefore defined as
\begin{align*}
    \tilde{\varphi}_\mu^N = \argmin_{p\in\lH_N}\, \left\lVert \left(I+\alpha W_\mu(T)\right)p - \alpha y_\mu^T\right\rVert_\lH^2,
\end{align*}
i.e.~by minimizing the residual of the linear system over the reduced space~$\lH_N$.
In practice, it is possible to compute~$\tilde{\varphi}_\mu^N$ by selecting a basis~$\varphi_1,\ldots,\varphi_N\in\lH_N$ of~$\lH_N$ (a so-called reduced basis), computing~$y_\mu^i = \left(I+\alpha W_\mu(T)\right)\varphi_i$ for~$i=1,\ldots,N$ and solving the linear system
\begin{align*}
    \sum\limits_{i=1}^{N}\gamma_i\cdot y_\mu^i = P_{\operatorname{span}(y_\mu^1,\ldots,y_\mu^N)}\left(\alpha  y_\mu^T\right)
\end{align*}
for the coefficients~$\gamma=[\gamma_i]_{i=1}^{N}\in\R^N$, where~$P_{\operatorname{span}(y_\mu^1,\ldots,y_\mu^N)}$ denotes the orthogonal projection onto the linear subspace~$\operatorname{span}(y_\mu^1,\ldots,y_\mu^N)\subset\lH$.
The reduced solution is then given as
\begin{align*}
    \tilde{\varphi}_\mu^N = \sum\limits_{i=1}^{N}\gamma_i\cdot \varphi_i.
\end{align*}
We remark at this point that the main computational effort in this reduced-order model is required to obtain~$y_\mu^1,\ldots,y_\mu^N$ by applying the controllability Gramian, i.e.~solving two dynamical systems, at the costs of
\begin{align*}
    \mathcal O\!\left(n^3+m^3+N\,n_t\,n\,(n+m)+N^2\,n+N^3\right),
\end{align*}
see~\cite[Section~5.2.2]{kleikamp2025greedy}.
It is therefore beneficial to choose the dimension~$N$ of the reduced space as small as possible in order to achieve a large speedup compared to the full-order simulation.
\par
As shown in~\cite{kleikamp2025greedy}, the norm of the residual of the linear system is a reliable and efficient a posteriori error estimator for the error in the final time adjoint, i.e.~we have for any approximate final time adjoint~$p\in\lH$ that
\begin{align*}
    \left\lVert\varphi_\mu^*(T)-p\right\rVert_\lH\ \leq\ \eta_\mu(p)\ \leq\ \left\lVert I+\alpha W_\mu(T)\right\rVert_{\mathcal{L}(\lH)}\left\lVert\varphi_\mu^*(T)-p\right\rVert_\lH.
\end{align*}
The error estimator~$\eta_\mu\colon\lH\to[0,\infty)$ is for~$p\in\lH$ defined as
\begin{align}\label{eq:error-estimator}
    \eta_\mu(p)\coloneqq\left\lVert \left(I+\alpha W_\mu(T)\right)p-\alpha  y_\mu^T\right\rVert_{\lH}.
\end{align}
To evaluate the error estimator, it is essentially sufficient to apply the operator~$I+\alpha W_\mu(T)$ to the approximate final time adjoint.
This allows for an efficient error estimation without the need to compute the exact final time adjoint by solving the~FOM.
It is important to note here that the error estimator is not restricted to the reduced solution~$\tilde{\varphi}_\mu^N\in\lH_N$ but is applicable to any~$p\in\lH$ with the same guarantees of a reliable and efficient estimate.
\par
Based on the error estimate~$\eta_\mu(p)$ it is also possible to bound the error in the control, for instance by
\begin{align*}
    \lVert u_\mu^*-\tilde{u}_\mu^p\rVert_{L^2([0,T];\mathcal{U})}^2 = \int\limits_{0}^{T} \lVert u_\mu^*(t)-\tilde{u}_\mu^p(t)\rVert_\mathcal{U}^2\d{t} = \langle \varphi_\mu^*(T)-p,W_\mu(T)(\varphi_\mu^*(T)-p)\rangle \leq \lVert W_\mu(T)\rVert_{\mathcal{L}(\lH)}\cdot\eta_\mu^2(p),
\end{align*}
where~$\tilde{u}_\mu^p\colon[0,T]\to\mathcal{U}$ denotes the control induced by the approximate final time adjoint~$p\in\lH$, i.e.~$\tilde{u}_\mu^p=-B_\mu^*e^{(T-t)A_\mu^*} p$, see also~\eqref{equ:positive-semidefinite-Gramian}. The error in the state at final time can further be bounded by
\begin{align*}
    \lVert x_\mu^*(T)-x_\mu^p(T)\rVert_\lH = \lVert W_\mu(T)\varphi_\mu^*(T)-W_\mu(T)p\rVert_\lH \leq \lVert W_\mu(T)\rVert_{\mathcal{L}(\lH)}\cdot\eta_\mu(p),
\end{align*}
where~$x_\mu^p\colon[0,T]\to\lH$ denotes the state trajectory associated to the final time adjoint~$p$, computed by solving~\eqref{eq:opt_sys_a}.
Consequently, if the upper bound of the Gramian norm is on disposal, the residual-based a posteriori error estimator provides estimates for control and state errors as well.

\subsection{Study of the Kolmogorov width  decays}\label{sec:transport-dominated-effect-moving-target}
The efficiency of linear reduced basis methods is directly related to the Kolmogorov widths of the solution manifold.
We recall that, given a compact subset~$\lM\subset \lH$, its Kolmogorov~$N$-width is defined by
\begin{align}\label{eq:Kolmo_w}
    d_N(\lM)
    \coloneqq
    \inf_{\substack{V_N\subset \lH\\ \dim(V_N)=N}}
    \sup_{v\in\lM}
    \inf_{w\in V_N}
    \|v-w\|_{\lH}.
\end{align}
This quantity measures the best possible error achieved by approximating the manifold~$\lM$ using a linear space of dimension~$N$. Therefore, the decay rate of~$d_N(\lM)$ determines the efficiency of linear reduced-order approximation methods.
\par
The final time adjoint plays a crucial role in the class of parametric optimal control problems we consider in this contribution.
In particular, for reduced-order models such as the one introduced in~\Cref{sec:linear-reduced-model}, it is important to understand how the final time adjoint varies with the parameter.
We therefore analyze in the following a particular case where certain properties of the parametrized input data (such as the target~$y_\mu^T$)  are transferred to the final time adjoint and thus identify scenarios in which reduced-order models relying on low-dimensional linear subspaces are insufficient to capture the solution manifold~$\lM_\ph$ of final time adjoints well.
In particular, we aim to relate the Kolmogorov widths of the manifold of terminal target states
\begin{align}\label{eq:Man_Tar}
    \lM_T \coloneqq\{y_\mu^T:\mu\in\params\}
\end{align}
to those of the solution manifold
\begin{align}\label{eq:Man_Fin}
    \lM_\varphi\coloneqq  \{\varphi_\mu^*(T):\mu\in\params\}.
\end{align}
In order to analyze to which extent the structural properties of the target manifold~\eqref{eq:Man_Tar} are transferred to the manifold of final time adjoints~\eqref{eq:Man_Fin}, we consider two special cases presented in the next subsections. 
\begin{remark}
    As already indicated in the previous section, the assumption of zero initial datum~$y_\mu^0=0$ does not impose any restrictions on the system under consideration. Indeed, in a general case all the analysis can be conducted along the same lines just by replacing the final targets manifold~$\lM_T$ by
   ~$\{y_\mu^T- e^{T A_\mu}y_\mu^0:\mu\in\params\}$.
\end{remark}

\subsubsection{Parameter-independent Gramian}
We consider a simplified setting in which the following assumption holds:
\begin{assumption}
    In this subsection, we assume that both the dynamics~$A$ and the control operator~$B$ are parameter-independent, resulting in a parameter-independent Gramian~$W_\mu(T) = W(T)$.
\end{assumption}

We first recall the well-known result providing an invariance property of the Kolmogorov width~\eqref{eq:Kolmo_w} under a bounded linear isomorphism, see~\cite[Theorem~1.1]{Pinkus1985}. For the sake of completeness, we present its brief proof. 

\begin{theorem}\label{th:abs_transfer}
    Let~$\lH$ be a Hilbert space, let~$\lM\subset\lH$ be a compact subset, and assume that~$\lA \in\mathcal{L}(\lH)$ with bounded inverse. Then, for every~$N\in\mathbb N$,
    \begin{align}\label{eq:width_transfer}
        \frac{1}{\|\lA^{-1}\|_{\mathcal L(\lH)}}
        \,d_N(\lM)
        \leq
        d_N(\lA\lM)
        \leq
        \|\lA\|_{\mathcal L(\lH)}
        \,d_N(\lM).
    \end{align}
\end{theorem}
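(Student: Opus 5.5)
The plan is to prove the upper bound directly and then get the lower bound by applying the upper bound to $\lA^{-1}$.

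\textbf{Upper bound.} Fix $N\in\mathbb N$, $\varepsilon>0$, and a subspace $V_N\subset\lH$ with $\dim V_N=N$ such that
\begin{align*}
    \sup_{v\in\lM}\inf_{w\in V_N}\|v-w\|_{\lH}\le d_N(\lM)+\varepsilon .
\end{align*}
Since $\lA$ is injective, $\lA V_N$ is again a subspace of dimension exactly $N$, so it is an admissible competitor in \eqref{eq:Kolmo_w} for the set $\lA\lM$. For any $\lA v\in\lA\lM$ and any $w\in V_N$ we have
\begin{align*}
    \inf_{z\in \lA V_N}\|\lA v-z\|_{\lH}\le \|\lA v-\lA w\|_{\lH}\le \|\lA\|_{\mathcal L(\lH)}\|v-w\|_{\lH}.
\end{align*}
Taking the infimum over $w\in V_N$ and then the supremum over $v\in\lM$ gives $d_N(\lA\lM)\le\|\lA\|_{\mathcal L(\lH)}\,(d_N(\lM)+\varepsilon)$. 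Letting $\varepsilon\to0$ yields the right inequality in \eqref{eq:width_transfer}.

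\textbf{Lower bound.} The set $\lA\lM$ is compact, being the continuous image of a compact set, and $\lA^{-1}\in\mathcal L(\lH)$ has bounded inverse $\lA$. We may therefore apply the upper bound with $\lA^{-1}$ in place of $\lA$ and $\lA\lM$ in place of $\lM$. This gives $d_N(\lM)=d_N(\lA^{-1}\lA\lM)\le\|\lA^{-1}\|_{\mathcal L(\lH)}\,d_N(\lA\lM)$, which after rearranging is the left inequality.

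\textbf{Edge case.} The only step needing care is that the transported space must have dimension exactly $N$, since \eqref{eq:Kolmo_w} requires $\dim V_N=N$; injectivity of $\lA$ guarantees this. If $\dim\lH<N$ there is no admissible subspace, and both widths are interpreted as $0$ by convention, so the inequalities hold trivially. Compactness is not essential to the argument; it is used only to ensure the quantities are finite.
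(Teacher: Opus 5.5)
Your proposal is correct and follows essentially the paper's proof. The upper bound comes from transporting $N$-dimensional subspaces through the isomorphism; you push an $\varepsilon$-near-optimal $V_N$ forward to $\lA V_N$, while the paper pulls an arbitrary $V_N$ back to $\lA^{-1}V_N$, which is equivalent. The lower bound is, as in the paper, the upper bound applied to $\lA^{-1}$ and the set $\lA\lM$.
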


Equivalently, if there exist constants~$c_T,C_T>0$ such that
\begin{align*}
    c_T\|z\|_{\lH}
    \leq
    \|\lA z\|_{\lH}
    \leq
    C_T\|z\|_{\lH},
    \qquad
    \text{for all }  z\in\lH,
\end{align*}
then
\begin{align*}
    c_T d_N(\lM)
    \leq
    d_N(\lA\lM)
    \leq
    C_T d_N(\lM).
\end{align*}

\begin{proof}
Let~$V_N\subset\lH$ be an arbitrary subspace with~$\dim(V_N)=N$. Since~$\lA$ is linear and invertible, the set~$\lA^{-1}V_N$ is also an~$N$-dimensional subspace of~$\lH$. Hence, for every~$u\in\lM$,
\begin{align*}
    \inf_{v\in V_N}
    \|\lA u-v\|_{\lH}
    &=
    \inf_{w\in \lA^{-1}V_N}
    \|\lA u-\lA w\|_{\lH}
    \\
    &\leq
    \|\lA\|_{\mathcal L(\lH)}
    \inf_{w\in \lA^{-1}V_N}
    \|u-w\|_{\lH}.
\end{align*}
Taking the supremum over~$u\in\lM$ and then the infimum over all~$N$-dimensional subspaces~$V_N$, we obtain
\begin{align*}
    d_N(\lA\lM)
    \leq
    \|\lA\|_{\mathcal L(\lH)}
    d_N(\lM).
\end{align*}
For the reverse inequality, we apply the previous estimate to the set~$\lA\lM$ and to the operator~$\lA^{-1}$. Since
\begin{align*}
    \lM
    =
    \lA^{-1}(\lA\lM),
\end{align*}
it follows that
\begin{align*}
    d_N(\lM)
    \leq
    \|\lA^{-1}\|_{\mathcal L(\lH)}
    d_N(\lA\lM),
\end{align*}
which yields the lower bound in~\eqref{eq:width_transfer}.
\end{proof}

As a consequence, we may apply the previous result to the manifold of final time adjoints.
\begin{corollary}
    There exist constants~$c_T,C_T>0$ such that
    \begin{align*}
        c_T d_N(\lM_T)
        \leq
        d_N(\lM_\varphi)
        \leq
        C_T d_N(\lM_T),
        \qquad
        \text{for all }  N\in\mathbb N.
    \end{align*}
    In particular, $d_N(\lM_\varphi)$ and~$d_N(\lM_T)$ are equivalent, that is, the Kolmogorov width is the same up to multiplicative constants independent of~$N$.
\end{corollary}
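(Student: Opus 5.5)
We will derive the corollary from Theorem~\ref{th:abs_transfer}, applied to the operator that maps targets to final time adjoints. Since the Gramian is parameter-independent and $y_\mu^0=0$, equation~\eqref{eq:final_adjoint_system} gives, for every $\mu\in\params$,
\begin{align*}
    \varphi_\mu^*(T) = \alpha\left(I+\alpha W(T)\right)^{-1} y_\mu^T .
\end{align*}
We therefore set $\lA \coloneqq \alpha\left(I+\alpha W(T)\right)^{-1}$, so that $\lM_\varphi = \lA\lM_T$. Note that $\lA$ does not depend on $\mu$; this is exactly where the standing assumption of this subsection enters. Compactness of $\lM_T$ follows from the compactness of $\params$ and the continuity of $\mu\mapsto y_\mu^T$ in Assumption~\ref{as:param_dep}, since it is the continuous image of a compact set.

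The key step is to show that $\lA$ is a bounded operator with a bounded inverse, and to give explicit constants. By~\eqref{equ:positive-semidefinite-Gramian}, $W(T)$ is bounded, self-adjoint and positive semidefinite. Hence for every $z\in\lH$
\begin{align*}
    \|z\|_\lH^2 \leq \langle (I+\alpha W(T))z,z\rangle \leq \|(I+\alpha W(T))z\|_\lH\,\|z\|_\lH .
\end{align*}
This shows that $I+\alpha W(T)$ is bounded below by $1$. Being self-adjoint and injective with closed range, it has dense and hence full range, so it is invertible by Lax--Milgram or the bounded inverse theorem, with
\begin{align*}
    \|(I+\alpha W(T))^{-1}\|_{\mathcal L(\lH)}\leq 1 .
\end{align*}
Consequently $\|\lA\|_{\mathcal L(\lH)}\leq\alpha$. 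Moreover $\lA^{-1}=\alpha^{-1}(I+\alpha W(T))$ is bounded, with $\|\lA^{-1}\|_{\mathcal L(\lH)}\leq \alpha^{-1}(1+\alpha\|W(T)\|_{\mathcal L(\lH)})$.

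Inserting these bounds into~\eqref{eq:width_transfer} yields the claim with
\begin{align*}
    c_T=\frac{\alpha}{1+\alpha\|W(T)\|_{\mathcal L(\lH)}},\qquad C_T=\alpha .
\end{align*}
Both constants are independent of $N$, which gives the asserted equivalence of $d_N(\lM_\varphi)$ and $d_N(\lM_T)$. The only step that needs any care is the invertibility of $I+\alpha W(T)$ on a possibly infinite-dimensional $\lH$, and the coercivity argument above settles it directly. Everything else is a routine application of Theorem~\ref{th:abs_transfer}.
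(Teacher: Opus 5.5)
Your proof is correct and is essentially the paper's argument: both apply \Cref{th:abs_transfer} to the operator of the final time adjoint equation and get invertibility of $I+\alpha W(T)$ from coercivity. The only differences are cosmetic. The paper applies the theorem to $\lA_T=I+\alpha W(T)$ and $\lM_\varphi$ (using $\lA_T\lM_\varphi=-\alpha\lM_T$) instead of to $\alpha(I+\alpha W(T))^{-1}$ and $\lM_T$, which yields the same constants; and with $y_\mu^0=0$, \eqref{eq:final_adjoint_system} actually gives $\varphi_\mu^*(T)=-\alpha(I+\alpha W(T))^{-1}y_\mu^T$, a sign slip that is harmless because $d_N(-\lM)=d_N(\lM)$.
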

\begin{proof}
    Introducing the operator 
    \begin{align}\label{eq:At}
        \lA_T=I + \alpha W(T),
    \end{align}
    the relation~\eqref{eq:final_adjoint_system} reduces to
    \begin{align*}
        \lA_T\varphi_\mu^*(T)
        =
        -\alpha y_\mu^T.
    \end{align*}
    Therefore,
    \begin{align*}
        \lA_T\lM_\varphi
        =
        -\alpha \lM_T.
    \end{align*}
    Since~$W(T)$ is self-adjoint and positive semidefinite and~$\alpha >0$, for every~$z\in \lH$ we have, 
    \begin{align*}
        \left\langle \lA_T z,z\right\rangle_{\lH} = \|z\|_{\lH}^2 + \alpha \left\langle W(T) z\, , \, z \right\rangle_{\lH} \geq \|z\|_{\lH}^2.
    \end{align*}
    Therefore, $\lA_T$ is coercive. Since~$\lA_T$ is also bounded, the Lax-Milgram theorem implies that the operator~$\lA_T$ is invertible and~$\|\lA_T^{-1}\|_{\mathcal{L}(\lH)}\leq 1$, that is, $\lA_T$
    is boundedly invertible.
    \par
    This allows applying~\Cref{th:abs_transfer} with the operator~$\lA_T$ and  the manifold~$\lM_\varphi$, which yields the claim.
\end{proof}

The last result implies that if the target manifold~$\lM_T$ has slowly decaying Kolmogorov widths, then the manifold of final time adjoints~$\lM_\varphi$ inherits the same decay rate up to multiplicative constants.
Therefore, for instance the poor approximability of transport-dominated target manifolds, see~\cite{ohlberger2016reduced,greif2019decay,arbes2025kolmogorov}, is transferred to the corresponding family of final time adjoints.
In the following, we will consider target states that are given as fixed profiles that are shifted depending on the parameter, similar to initial conditions moving over time in transport or wave equations.
The Kolmogorov width of such solution manifolds has been investigated in the aforementioned references and is of the order of~$\mathcal{O}\!(N^{-1/2})$ for discontinuous profiles.
Hence, to reach a maximum approximation error of~$\varepsilon>0$, a suitable reduced space of dimension at least~$\varepsilon^{-2}$ (up to constants) has to be considered.

\subsubsection{Globally distributed control}\label{sec:gdc}
The objective here is to identify how properties of the target are transferred to the final time adjoints.
To this end, we now focus on a distributed control setting based on the following assumption:
\begin{assumption}\label{ass:glob_contr}
    In this subsection we assume the following:
    \begin{enumerate}
        \item The map~$B$ is a parameter-independent, globally distributed control operator, i.e.~$B$ is the identity on~$\lH$;
        \item The operators~$A_\mu$ are self-adjoint and satisfy uniformly the bounds
        \begin{align}\label{ass:A_unif}
            c_1 A \leq A_\mu \leq c_2 A,
        \end{align}
        for some constants~$c_1,c_2>0$ and a negative definite, self-adjoint operator~$A<0$ with compact resolvent. 
    \end{enumerate}
\end{assumption}
\begin{remark}
    A typical example for a family of operators~$A_\mu$ satisfying the above assumptions is the Laplacian operator~$\nabla\cdot(\mA_\mu (x) \nabla)$ with uniformly bounded coefficient matrices satisfying ~$c_1 \mI \leq \mA_\mu \leq c_2 \mI~$ with constants~$c_1,c_2>0$.
\end{remark}

In order to illustrate how a transport-dominated character in the target states is transferred, let us now make explicit the structure of~$\lA_T$ defined by~\eqref{eq:At} in the distributed control setting. The control system then reads 
\begin{align*}
    \dot{y}_\mu(t) = A_\mu y_\mu(t) + u(t)\ \text{for }t\in[0,T],\qquad y_\mu(0) = y_\mu^0 \in \lH.
\end{align*}
\par
For every~$u\in L^2(0,T ; \lH)$, the state at time~$T$ is given by the Duhamel's formula, that is,
\begin{align*}
    y_\mu(T) = e^{T A_\mu }y_\mu^0 + \int_0^T e^{(T-s) A_\mu} u(s)\d{s}.
\end{align*}
The~\Cref{ass:glob_contr} implies that each ~$A_\mu$ is a self-adjoint operator  with compact resolvent. In particular, it allows for a spectral decomposition and there exists a sequence of corresponding eigenpairs~$\{(\lambda_{\mu,k},\phi_{\mu,k})\}_{k\geq1}$ such that~$\{\phi_{\mu,k}\}_{k\geq 1}$ forms an orthonormal basis in~$\lH$. Moreover, from the assumption~\eqref{ass:A_unif} and  the min-max characterization of eigenvalues, it follows
\begin{align}\label{eq:eig_bounds}
   c_1 \lambda_{k}  \leq \lambda_{\mu,k} \leq c_2 \lambda_{k} ,
\end{align}
where~$(\lambda_{k})_{k\geq 1}$ is the sequence of decreasing eigenvalues of the operator~$A$ (diverging to minus infinity if~$\lH$ is infinite-dimensional).
\par
Moreover, the finite-time controllability Gramian associated with the system and defined by~\eqref{eq:w_gram} is simplified to
\begin{align*}
    W_\mu(T) = \int_0^T e^{(T-s) A_\mu} e^{(T-s) A_\mu}\d{s} = \int_0^T e^{2t A_\mu}\d{t},
\end{align*}
after the change of variable~$t = T-s$. 
It follows immediately from the spectral properties of~$A_\mu$ that~$W_\mu(T)$ is bounded, self-adjoint and compact on~$ \lH$.
More specifically, its representation in the basis consisting of the eigenvectors of~$A_\mu$ is diagonal:
\begin{align*}
    W_\mu(T)\phi_{\mu,k} = \omega_{\mu,k}(T)\phi_{\mu,k},
    \qquad\text{where} \qquad
    \omega_{\mu,k}(T)=\frac{1-e^{2\lambda_{\mu,k} T}}{-2\lambda_{\mu,k}}.
\end{align*}
Consequently,  by  expanding the target and the final time adjoint in this basis as
\begin{align*}
    y_\mu^T = \sum_{k\ge1} y_{\mu,k}^T \phi_k,
    \qquad
    \varphi_\mu^*(T) = \sum_{k\ge1} \varphi_{\mu,k}^*(T)\phi_k,
\end{align*}
the adjoint equation is reduced to the mode-by-mode relation
\begin{align*}
    (1 + \alpha \omega_{\mu,k}(T))\cdot \varphi_{\mu,k}(T)
    =
    - \alpha y_{\mu,k}^T.
\end{align*}
Thus, we obtain
\begin{align*}
    \varphi_{\mu,k}^*(T)
    =
    - a_{\mu,k}(T)\, y_{\mu,k}^T
    \qquad\text{with }
    a_{\mu,k}(T) \coloneqq \frac{\alpha}{1 + \alpha \omega_{\mu,k}(T)}.
\end{align*}
The sequence~$\{a_{\mu,k}(T)\}_{k\ge1}$ is uniformly bounded above and below by positive constants, and therefore defines a bounded isomorphism at the coefficient level.
\par
Indeed, it holds
\begin{align*}
    0\leq \omega_{\mu,k}(T)
    \leq \frac{1}{-2\lambda_{\mu,k}}
    \leq\frac{1}{2 c_1 |\lambda_k|}
    \leq\frac{1}{2 c_1 |\lambda_1|},
\end{align*}
where~$c_1$ is the constant from~\eqref{eq:eig_bounds}.
As a consequence, $\omega_{\mu,k}(T)\to0$ as~$k\to\infty$, uniformly with respect to~$\mu$.  In particular, the multiplier~$a_{\mu,k}(T)$ is bounded from above and below by positive constants independent of~$k$ and~$\mu$. More precisely,
\begin{align*}
    0<a_{\mu,k}(T)=\frac{\alpha}{1+\alpha\omega_{\mu,k}(T)}\leq \alpha,
\end{align*}
and also
\begin{align*}
    a_{\mu,k}(T)=\frac{\alpha}{1+\alpha\omega_{\mu,k}(T)}
    \geq
    \frac{\alpha}{1+\alpha\frac{1}{2c_1|\lambda_1|}}
    \eqqcolon c_T>0.
\end{align*}
This yields upper and lower bounds on the coefficients~$\varphi_{\mu,k}^*(T)$ of the form
\begin{align}\label{eq:comparison_target_adjoint_coefficients}
    c_T |y_{\mu,k}^T|
    \leq
    |\varphi_{\mu,k}^*(T)|
    \leq
    \alpha |y_{\mu,k}^T|
    \qquad\text{for all }
    k\ge1\text{ and }\mu\in\params.
\end{align}

\begin{remark}\label{rm:Sob_reg}
    We highlight that the uniform equivalence given in~\eqref{eq:comparison_target_adjoint_coefficients} implies that the Sobolev-type summability is preserved, that is,
    \begin{align}\label{eq:Fou_pr}
        \sum_{k\geq1}(1+|\lambda_k|)^s |\varphi_{\mu,k}^*(T)|^2
        \lesssim
        \sum_{k\geq1}(1+|\lambda_k|)^s |y_{\mu,k}^T|^2.
    \end{align}
    Thus, every Sobolev-type regularity assumption on the target, formulated in terms of the summability of its coefficients, is inherited by the final time adjoint. Moreover, if the target coefficients satisfy a two-sided decay condition, then the same decay holds for the final time adjoint. 
    \par
    This transfer of regularity from the target to the final time adjoint implies consequences for the approximation properties of the corresponding solution manifold, in particular in the decay of the Kolmogorov widths. Indeed, let us assume that the target coefficients satisfy
    \begin{align*}
        \sum_{k\geq1}(1+|\lambda_k|)^s |y_{\mu,k}^T|^2 \leq C\qquad\text{for all }\mu\in\params
    \end{align*}
    for some constant~$C>0$ independent of~$\mu$. Then, by~\eqref{eq:Fou_pr}, the set of coefficient sequences~$\left\{ (\varphi_{\mu,k}^*(T))_{k\geq 1} \colon\mu\in\mathcal P\right\}$ is contained in a bounded subset of the weighted sequence space~$h_A^s$, where
    \begin{align*}
        h_A^s\coloneqq \left\{ (a_k)_{k\geq 1} \in \ell^2\colon \sum_{k\geq1}(1+|\lambda_k|)^s |a_k|^2<\infty \right\}.
    \end{align*}
    It is well known (see e.g.~\cite[Chapter~VI~\S2]{Pinkus1985}) that the Kolmogorov~$N$-width of the unit ball of~$h_A^s$ in~$\ell^2$ decays polynomially. Consequently, the manifold of final time adjoints admits a polynomial upper bound for its Kolmogorov widths whenever the target coefficients are uniformly bounded in~$h_A^s$.
    \par
    We emphasize, however, that this argument only provides a polynomial upper bound for the Kolmogorov widths. In the next subsection we present an example in which, beyond regularity, the slow Kolmogorov width decay of the target manifold is itself transferred to the manifold of final time adjoints.
\end{remark}

\subsubsection{Moving targets and the transfer of a slow Kolmogorov width decay}\label{ex:TTB}
Here we verify the above analysis of the Kolmogorov width decay on a special example involving moving targets (such as the characteristic functions of a small rectangle whose center depends on~$\mu$). 
In that case, the manifold of final targets~$\lM_T~$ contains translations of a localized profile, which is well-known to lead to slowly decaying Kolmogorov widths~\cite{ohlberger2016reduced,greif2019decay,arbes2025kolmogorov}.
\par
Our objective is to illustrate how a transport-dominated regime in the moving target appears and is transferred to the final time adjoints. We study a particular well-known example, namely the heat equation in a rectangular domain with distributed control. The aforementioned setting is considered as a prototypical test case, since the Dirichlet Laplacian generates an analytic and strongly smoothing semigroup. Hence, if transport-dominated effects persist even in this prototypical and favorable situation, they should be understood as an intrinsic feature of the parameter dependence rather than a consequence of the dynamics or the control mechanism.
\par
In the example we define ~$\Omega=(0,L_1)\times\cdots\times(0,L_d)\subset\mathbb R^d$ and set~$\lH\coloneqq L^2(\Omega)$.
In order to simplify the presentation, we fix the problem setting as follows. 
\begin{enumerate}
    \item The penalization constant~$\alpha$ is equal to~$1$;
    \item The parameter dependence enters only through the moving targets, while the dynamics and the control operator are parameter-independent;
    \item In particular, we consider a globally distributed control operator~$B=I$ and the Dirichlet Laplacian
    \begin{align*}
        A=\Delta,\qquad D(A)=H^2(\Omega)\cap H_0^1(\Omega).
    \end{align*}
\end{enumerate}
The Dirichlet Laplacian~$A=\Delta$ is self-adjoint, negative definite, and has compact resolvent. Its eigenfunctions are given by
\begin{align*}
    \phi_{\mathbf k}(x)
    =
    \prod_{j=1}^d \sqrt{\frac{2}{L_j}}
    \sin\left(\frac{k_j\pi x_j}{L_j}\right)
    \qquad\text{for }
    \mathbf k=(k_1,\dots,k_d)\in\mathbb N^d,
\end{align*}
with corresponding eigenvalues
\begin{align*}
    \lambda_{\mathbf k}
    =
    -\sum_{j=1}^d \left(\frac{k_j\pi}{L_j}\right)^2.
\end{align*}
Thus, for every~$z\in\lH$,
\begin{align*}
    z=\sum_{\mathbf k\in\mathbb N^d} z_{\mathbf k}\phi_{\mathbf k},
    \qquad 
    z_{\mathbf k}\coloneqq\langle z,\phi_{\mathbf k}\rangle_{L^2(\Omega)}.
\end{align*}
Due to the choice of the globally  distributed control, the finite-time (parameter-independent) controllability Gramian~$W(T)$ given by~\eqref{eq:w_gram} is diagonalisable in the basis~${\phi_{\mathbf k}}$ and acts as
\begin{align*}
    W(T)\phi_{\mathbf k}
    =
    w_{\mathbf k}(T)\phi_{\mathbf k},
    \qquad
    w_{\mathbf k}(T)
    =
    \frac{1-e^{2\lambda_{\mathbf k}T}}{-2\lambda_{\mathbf k}}.
\end{align*}
We now introduce a parameter-dependent moving target. Let~$\ell=(\ell_1,\dots,\ell_d)\in(0,\infty)^d$ be the vector of side lengths of a small rectangle~$Q_\ell$. For each parameter~$\mu\in\params\subset\mathbb R^d$, let
\begin{align*}
    c(\mu)=(c_1(\mu),\dots,c_d(\mu))\in\Omega
\end{align*}
denote the center of the rectangle. Each component
\begin{align*}
    c_j(\mu)\colon\params\to (0,L_j)
\end{align*}
represents the position of the center in the~$j$-th spatial direction, and we assume that the mapping~$\mu\mapsto c(\mu)$ is  continuous. Moreover, we assume that the rectangle remains strictly inside~$\Omega$, namely
\begin{align*}
    \frac{\ell_j}{2}<c_j(\mu)<L_j-\frac{\ell_j}{2},
    \qquad\text{for all }j=1,\dots,d\text{ and }\mu\in\params.
\end{align*}
The moving target is defined as the characteristic function of the rectangle, i.e.
\begin{align*}
    y_\mu^T(x)
    =
    \mathbf 1_{Q_\ell(c(\mu))}(x)=\begin{cases}
        1,&\text{if }x\in Q_\ell(c(\mu)),\\
        0,&\text{otherwise},
    \end{cases}
\end{align*}
where
\begin{align*}
    Q_\ell(c(\mu))
    \coloneqq
    \prod_{j=1}^d
    \left(c_j(\mu)-\frac{\ell_j}{2},
          c_j(\mu)+\frac{\ell_j}{2}\right).
\end{align*}
Therefore, its Fourier coefficients can be represented by
\begin{align*}
    (y_\mu^T)_{\mathbf k}
    =
    \prod_{j=1}^d
    \sqrt{\frac{2}{L_j}}
    \frac{2L_j}{k_j\pi}
    \sin\left(\frac{k_j\pi c_j(\mu)}{L_j}\right)
    \sin\left(\frac{k_j\pi \ell_j}{2L_j}\right).
\end{align*}
In particular, the dependence on~$\mu$ is entirely encoded in the factors
\begin{align*}
    \sin\left(\frac{k_j\pi c_j(\mu)}{L_j}\right).
\end{align*}
We now transfer this dependence to the spatial structure by using the identity
\begin{align*}
    \sin a\,\sin b
    =
    \frac12\left[\cos(a-b)-\cos(a+b)\right].
\end{align*}
Applying this relation componentwise yields
\begin{align*}
    \sin\left(\frac{k_j\pi c_j(\mu)}{L_j}\right)
    \sin\left(\frac{k_j\pi x_j}{L_j}\right)
    =
    \frac12
    \left[
    \cos\left(\frac{k_j\pi(x_j-c_j(\mu))}{L_j}\right)
    -
    \cos\left(\frac{k_j\pi(x_j+c_j(\mu))}{L_j}\right)
    \right].
\end{align*}
Consequently, there holds
\begin{align*}
    (y_\mu^T)_{\mathbf k}\phi_{\mathbf k}(x)
    =
    \prod_{j=1}^d
    \frac{2}{k_j\pi}
    \sin\left(\frac{k_j\pi \ell_j}{2L_j}\right)
    \left[
    \cos\left(\frac{k_j\pi(x_j-c_j(\mu))}{L_j}\right)
    -
    \cos\left(\frac{k_j\pi(x_j+c_j(\mu))}{L_j}\right)
    \right].
\end{align*}
As the next step we examine how this parameter dependence is transferred to the final time adjoints. According to~\eqref{eq:final_adjoint_system} and taking into account~$\alpha=1$, these satisfy the relation
\begin{align*}
    (I+W (T))\varphi_\mu^*(T)
    =
    -y_\mu^T.
\end{align*}
As the Gramian acts diagonally we obtain that
\begin{align}\label{eq:rep_shift}
    \varphi_\mu^*(T,x)
    =
    -\sum_{\mathbf k\in\mathbb N^d}
    \frac{(y_\mu^T)_{\mathbf k}}{1+w_{\mathbf k}(T)}
    \phi_{\mathbf k}(x).
\end{align}
Therefore, the Gramian acts diagonally and preserves the parametric structure at the level of coefficients.
The Fourier coefficients of the terminal adjoints have the same form as those of the prescribed final targets, up to a multiplication by a scalar~$1/(1+w_{\mathbf k}(T))$. 
Hence, the terminal adjoint is a regularized version of the indicator function of the rectangle~$Q_\ell$, whose center~$c(\mu)$ moves with the parameter. This regularization does not affect the Sobolev-regularity, which is preserved by the coefficient equivalence established in~\Cref{rm:Sob_reg}.
Consequently, the solution set~$\{\varphi_\mu^*(T):\mu\in\params\}$ consists of superpositions of shifted profiles, and therefore retains the transport-dominated structure of the target.
\begin{remark}
    The representation~\eqref{eq:rep_shift} should not be interpreted as stating that the final time adjoints are exactly translations of a single profile.
    Such a property is not expected in bounded domains with homogeneous Dirichlet boundary conditions.
    Nevertheless, the parameter dependence induced by the moving target is still retained in the Fourier representation, showing the transfer of the transport-dominated structure.
\end{remark}
Summarizing, the manifold~$\mathcal{M}_\varphi$ consisting of all final time adjoints inherits the transport-dominated structure of the target. In particular, for discontinuous targets moving depending on the parameter, the Kolmogorov widths decay slowly, which implies that linear subspace-based reduced-order models are inefficient for approximating the final time adjoints.

\subsection{Implications for linear reduced models}
In this section we have studied the Kolmogorov widths decay of the solution manifold in some special cases (assuming parameter-independent dynamics and/or globally distributed control operators). The considered examples reveal some of the fundamental limitations of linear projection-based reduced-order models. Even in the diagonal, distributed control setting, where the Gramian has the simplest possible action, the parameter-dependent moving target generates a transport-dominated family of terminal adjoints. Similarly, the regularity of targets is transferred to the final time adjoints. To the best of our knowledge, this influence of the parameter on the final time adjoint was not known beforehand. We emphasize that this phenomenon persists even for the heat equation: although the associated analytic semigroup provides strong regularization, it is not sufficient to overcome the difficulties induced by the transport-dominated component of a moving target. Thus, the slow decay of the associated Kolmogorov widths should be understood as a structural feature of the parameter dependence, rather than as a complication produced by the control operator and the underlying dynamics.
\par
This situation naturally appears when the parameterization induces spatial shifts of localized structures. As shown by the moving-target discussion in~\Cref{ex:TTB}, such manifolds cannot be approximated in an efficient manner by low-dimensional linear spaces. As a result, linear projection-based reduced-order models require increasingly large dimensions to achieve a reasonable accuracy. The computational costs of the online phase for the linear reduced model from~\Cref{sec:linear-reduced-model} scales linearly with the reduced dimension (see also the numerical experiments in~\Cref{sec:numerical-experiments}), which makes the online phase costly and reduces the efficiency of the method when a large reduced basis is required. This motivates us to seek for alternative approaches, for instance based on machine learning tools, as the one presented in the next section.

\section{A nonlinear machine learning-based surrogate}
\label{sec:nonlinear-surrogate}
In this section we describe a surrogate model for solving parametric optimal control problems of the form~\eqref{eq:PDOC_2}.
Our objective is to approximate the optimal final time adjoint determined by~\eqref{eq:final_adjoint_system}, since the optimal control, the adjoint trajectory, and the state trajectory are all uniquely determined by it through the optimality system~\eqref{eq:opt_sys_a}.
Moreover, approximating the final time adjoint preserves the residual-based a posteriori error estimator~\eqref{eq:error-estimator} which can be evaluated for any candidate approximation and therefore remains applicable beyond linear reduced spaces.
For these reasons, we focus on approximating the optimal final time adjoint rather than learning the control directly as a function of the parameter.
\par
In the next subsection, we introduce the neural network architecture specifically designed to approximate the solutions to the final time adjoint equation~\eqref{eq:final_adjoint_system}.

\subsection{U-Net architecture mapping parametric fields to final time adjoints}\label{sec:u-net-architecture}
The design of the proposed surrogate is guided by the theoretical results presented in~\Cref{sec:LROM_lim}.
As shown in~\Cref{sec:transport-dominated-effect-moving-target}, transferring information about the target positions to the final time adjoint turns out to be essential.
Consequently, accurate approximations of the solution manifold require preserving information about the location and geometry of the relevant spatial features. The neural network architecture employed in this work is designed precisely for this purpose.
Specifically, we employ a~U-Net architecture~\cite{ronneberger2015unet} that has been proven particularly effective for imaging tasks that require incorporating the surrounding context while preserving spatial information. Interpreting parameter-dependent fields arising from~PDEs as images, the~U-Net architecture is therefore particularly well suited to our application. Related approaches for parametric~PDEs have recently been discussed in~\cite{heiss2023multilevel,schuette2024multilevel}.
\par
Since for the problems we are interested in, spatial information such as the position of the support of the target state plays a crucial role, we incorporate this information directly into the input of the surrogate.
Instead of passing the parameter itself to the surrogate, we provide the full parametric fields, such as for instance, the initial condition, target state, and diffusion coefficient, as input to the neural network.
The output of the neural network will be an approximation of the corresponding final time adjoint.
\par
In other words, rather than learning the mapping~$\mu\mapsto\varphi_{\mu}^*(T)$ directly, we approximate the solution operator associated with the final time adjoint equation~\eqref{eq:final_adjoint_system}, i.e.
\begin{align*}
    (I+\alpha W_\mu(T))\varphi_\mu^*(T)
    =
    \alpha\left(T e^{A_\mu}y_\mu^0-y_\mu^T\right).
\end{align*}
To this end, let~$\mathcal X = \mathcal Y_0\times \mathcal Y_T \times \mathcal K \times\cdots$ denote the collection of parameter-dependent fields that determine the controllability Gramian~$W_\mu(T)$ and the right-hand side of~\eqref{eq:final_adjoint_system}, where~$\mathcal Y_0$, $\mathcal Y_T$, and~$\mathcal K$ represent suitable spaces for the initial condition, target state, diffusion coefficient, and other parameter-dependent quantities. We then train a neural network~$\Phi_\mathrm{U-Net}\colon\mathcal X\to\lH$ to approximate the mapping
\begin{align*}
    (y_\mu^0,y_\mu^T,\kappa_\mu,\ldots) \longmapsto \varphi_\mu^*(T).
\end{align*}
In contrast to approaches that learn a parameter-to-solution map, the input consists of fields that explicitly encode the information defining the operator and the right-hand side of the final time adjoint equation.
\par
The~U-Net architecture itself consists of a downsampling and an upsampling part in which the spatial resolution is reduced while adding more channels in the intermediate layers of the network.
In particular, the~U-Net does not perform a compression of the input data in the sense of a reduced-order model.
The input is instead transformed through hidden layers that typically contain even more dimensions than the input or the output themselves.
For instance in our numerical experiment in~\Cref{sec:ball-example}, the input dimension is~$8192$, the output size is~$4096$, whereas the bottleneck (see also~\Cref{fig:u-net-architecture}) is of size~$16384$ (with a spatial reduction from~$64\times64$ to~$8\times8$).
As we will see in our numerical experiments, the approach nevertheless leads to significant speedups compared to the full-order model.
The~U-Net further comprises skip-connections that pass feature maps of layers from the downsampling path to corresponding layers at the same spatial resolution in the upsampling part by channel concatenation (without any transformation).
The residual connection from the input to the output layer allows the output to be of the same regularity as the input.
As shown in the previous section, the regularity is preserved when mapping target positions to optimal final time adjoints. Consequently, preserving the regularity of the input is a desirable property of U-Net architectures, which is generally difficult to achieve using purely convolutional networks~\cite{habring2023note}.
\par
We point further out that when only the parameter vector is provided as input, the spatial distribution of the parametric fields is implicitly encoded and must first be reconstructed by the surrogate.
In contrast, using the parameter-dependent fields allows the network to access this information explicitly throughout the forward pass and the learning process.
Moreover, the~U-Net architecture we consider is based on convolutional and transposed-convolutional layers.
These layers are translation-equivariant, which in particular implies that the network can transfer spatial behavior observed in the training data to other locations in the spatial domain.
\par
An illustration of the architecture with inputs and outputs is provided in~\Cref{fig:u-net-architecture}.
The architecture we use consists of a downsampling part of three levels with two consecutive~$3\times3$-convolution layers (the number of channels doubles between levels), each convolutional layer followed by the~Gaussian Error Linear Unit~(GELU) activation function (defined as~$\rho(x)=x\cdot\Phi(x)$ where~$\Phi\colon\R\to[0,1]$ is the cumulative distribution function of the Gaussian distribution), and after each level a max-pooling with stride~$2$.
The bottleneck part of the~U-Net (at the very bottom in~\Cref{fig:u-net-architecture}) is made of two~$3\times3$-convolutions with~GELU activation and contains in total eight times as many channels as the first layer.
The final number of channels in the first level is set to~$32$, leading to~$256$ channels in the bottleneck path.
The upsampling part of the network is constructed by three levels of transposed convolutions with stride~$2$ (which halves the channel count and doubles the spatial extent), then channel-wise concatenation with the corresponding encoder skip-connection (which uses the features from the downsampling path in the respective levels \emph{before} applying the max-pooling), and finally two~$3\times 3$-convolutions, each followed by a~GELU activation.
Finally, an additional~$1\times 1$-convolutional layer is used to reduce the result after the upsampling stage to a single channel output.
\begin{figure}[htbp]
    \centering
    \resizebox{\textwidth}{!}{
        \begin{tikzpicture}
            \node[yslant=1] at (-2,0,0) {\includegraphics[width=2cm, height=4cm]{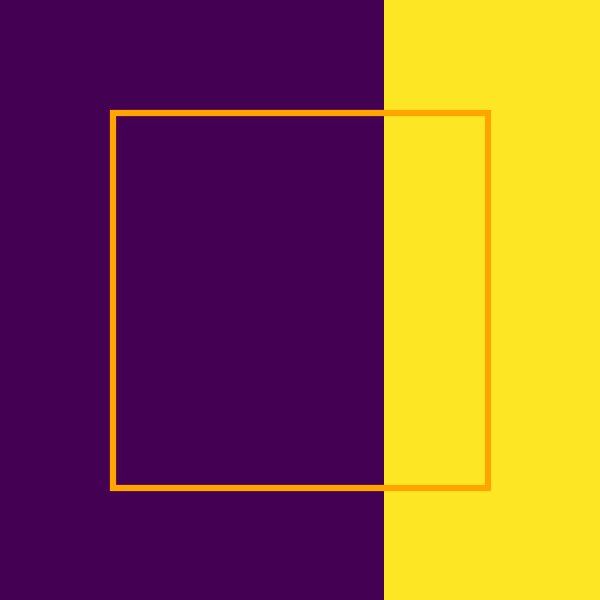}};
            \node[yslant=1] at (-1,0,0) {\includegraphics[width=2cm, height=4cm]{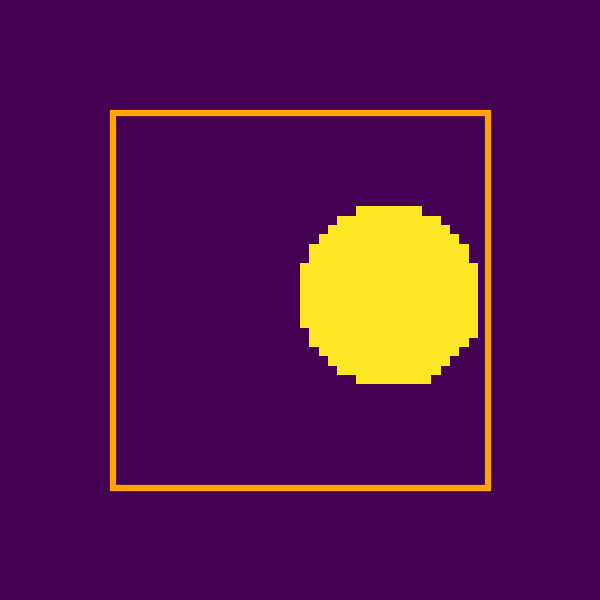}};
            \node at (-3,-3.75,-1) {\LARGE$\kappa_\mu$};
            \node at (-2,-3.75,-1) {\LARGE$y_\mu^T$};
            
            \cnnlayerend{0.0}{0}{0}{0.1}{4}{4}{orange};
            \cnnlayerend{0.5}{0}{0}{0.1}{4}{4}{orange};
            
            \cnnarrow{0.55}{0}{0}{1.95}{-3}{0};
            \cnnarrow{1.55}{0}{0}{18.55}{0}{0};
            
            \cnnlayerend{2.0}{-3}{0}{0.1}{3}{3}{red};
            \cnnlayerend{2.55}{-3}{0}{0.2}{3}{3}{orange};
            \cnnlayerend{3.15}{-3}{0}{0.2}{3}{3}{orange};
            
            \cnnarrow{3.25}{-3}{0}{4.6}{-6}{0};
            \cnnarrow{4.25}{-3}{0}{14.4}{-3}{0};
            
            \cnnlayerend{4.7}{-6}{0}{0.2}{1.5}{1.5}{red};
            \cnnlayerend{5.4}{-6}{0}{0.4}{1.5}{1.5}{orange};
            \cnnlayerend{6.2}{-6}{0}{0.4}{1.5}{1.5}{orange};
            
            \cnnarrow{6.4}{-6}{0}{7.6}{-9}{0};
            \cnnarrow{7.4}{-6}{0}{10.8}{-6}{0};
            
            \cnnlayerend{7.8}{-9}{0}{0.4}{1}{1}{red};
            \cnnlayerend{8.8}{-9}{0}{0.8}{1}{1}{orange};
            \cnnlayerend{10.0}{-9}{0}{0.8}{1}{1}{orange};
            
            \cnnarrow{10.4}{-9}{0}{11.8}{-6}{0};
            
            \cnnlayer   {12.0}{-6}{0}{0.4}{1.5}{1.5}{green};
            \cnnlayerend{12.4}{-6}{0}{0.4}{1.5}{1.5}{green};
            \cnnlayerend{13.2}{-6}{0}{0.4}{1.5}{1.5}{orange};
            \cnnlayerend{14.0}{-6}{0}{0.4}{1.5}{1.5}{orange};
            
            \cnnarrow{14.2}{-6}{0}{15.4}{-3}{0};
            
            \cnnlayer   {15.5}{-3}{0}{0.2}{3}{3}{green};
            \cnnlayerend{15.7}{-3}{0}{0.2}{3}{3}{green};
            \cnnlayerend{16.3}{-3}{0}{0.2}{3}{3}{orange};
            \cnnlayerend{16.9}{-3}{0}{0.2}{3}{3}{orange};
            
            \cnnarrow{17.0}{-3}{0}{19.55}{0}{0};
            
            \cnnlayer   {19.6}{0}{0}{0.1}{4}{4}{green};
            \cnnlayerend{19.7}{0}{0}{0.1}{4}{4}{green};
            \cnnlayerend{20.2}{0}{0}{0.1}{4}{4}{orange};
            \cnnlayerend{20.7}{0}{0}{0.1}{4}{4}{orange};
            
            \cnnarrow{20.75}{0}{0}{22.95}{0}{0};
            
            \cnnlayerend{23.0}{0}{0}{0.1}{4}{4}{blue};
            
            \node[yslant=1] at (24,0,0) {\includegraphics[width=2cm, height=4cm]{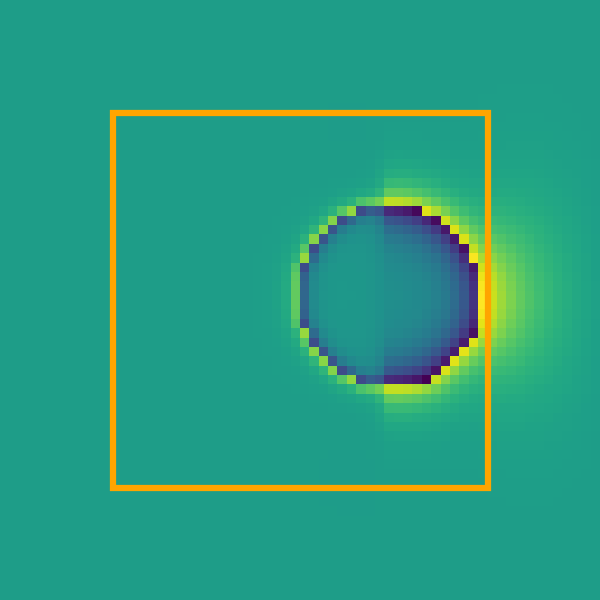}};
            \node at (23.5,-3.75,-1) {\LARGE$\varphi_\mu^*(T)$};
        \end{tikzpicture}
    }
    \caption{U-Net architecture with parametric fields as inputs and optimal final time adjoints as output. The orange boxes represent convolutional layers followed by~GELU activation functions and the red boxes refer to max-pooling layers. The green boxes depict transposed convolutions together with the channels concatenated from the corresponding encoder level via a skip connection. The blue box corresponds to the final~$1\times1$-convolution that maps the feature stack to the single output channel.}
    \label{fig:u-net-architecture}
\end{figure}

\subsection{Discussion on the nonlinear approach}\label{sec:discussion-nonlinear-approach}
We conclude this section clarifying the main advantages and limitations of the proposed nonlinear framework and compare it with existing approaches for parametrized optimal control problems.

\paragraph{Advantages of the method.}
The~U-Net combines several useful properties for the transport-dominated problems considered here.
By operating directly on parameter-dependent fields rather than low-dimensional parameter vectors, it preserves the spatial information associated with localized and translated structures that characterize the transport-dominated solution manifolds analyzed in~\Cref{sec:LROM_lim}.
Moreover, the online evaluation consists only of a neural network forward pass followed by the reconstruction of the corresponding state and control.
Finally, since the surrogate approximates the final time adjoint, the residual-based a posteriori error allows the quality of the prediction to be assessed after the approximation has been computed.
\par
We further remark that the~U-Net approach imposes a structural assumption on the way that parameters enter the optimal control problem. 
Rather than taking the parameter~$\mu$ itself as input, the~U-Net receives a finite collection of parameter-dependent fields represented on a computational grid.
Accordingly, we assume that all parameter-dependent quantities determining operators~$A_\mu$ and~$B_\mu$ can be represented through such fields, which can be naturally identified with elements of the discretized state space.
This setting covers the standard situations considered in optimal control problems, such as parameter-dependent coefficient fields, source terms, initial conditions, and target states.
\paragraph{Limitations of the method.}
This approach nevertheless has limitations.
As with any data-driven surrogate, an offline training phase is required and its quality depends on the available training set.
The method does not eliminate the offline cost associated with the generation of training data, since each training sample requires the solution of a full optimal control problem.
As we will see though, the~U-Net needs much less training data than, for instance, an autoencoder-based approach to reach similar errors.
In addition, convolutional architectures assume that the computational domain can be represented as a rectangle and the discretization is given as a regular grid.
Extensions to more general domains may require more specialized approaches.
\par
The limitation also arises when the parameter changes the problem in a way that cannot be encoded by a finite collection of fields on the computational grid.
For instance, our formulation does not directly incorporate parameter-dependent boundary conditions or more general operator-valued parameterizations requiring the discretized operator itself as an input to the network.
Moreover, the number of parameter-dependent fields is assumed to remain relatively small, similarly to an affine parameterization in reduced basis methods~\cite{ohlberger2016reduced}.
Potential extensions to more general parameterizations and discretizations, for instance parameter-dependent boundary conditions, will be discussed in the outlook of the paper.
\paragraph{Comparison with existing approaches.}
The~U-Net approach should be viewed as complementary to the existing reduced-order approaches.
As is well known, linear reduced-order models are effective whenever the solution manifold admits a rapidly decaying Kolmogorov width.
However, \Cref{sec:LROM_lim} shows that this assumption fails in transport-dominated situations.
Consequently, increasing the dimension of the reduced space may lead only to limited improvements in approximation quality.
\par
Compared with parameter-to-solution approaches (such as parametric decoders, see~\Cref{sec:parametric-decoders}) or with parameter-to-solution operator approaches (DeepONets~\cite{lu2021learning}, or hypernetwork-based models~\cite{chauhan2024brief}), the present framework incorporates the relevant spatial information explicitly through the input fields.
This avoids the need to reconstruct geometric information solely from a low-dimensional parameter vector and is particularly useful in the small-data regime.
\par
Another related family of approaches employs autoencoders in combination with a parameter-to-latent mapping~\cite{fresca2021comprehensive,franco2023deep}.
There the solution is first compressed into a latent representation and subsequently reconstructed, see~\Cref{sec:autoencoders-param-to-latent} for details.
While such methods have proven successful for many parametrized~PDEs, transport-dominated structures remain challenging when only a limited number of training samples is available.
In particular, the spatial information associated with translated or localized features must be compressed into a low-dimensional latent space before being recovered by the decoder.
The proposed framework avoids this intermediate compression step by learning the mapping from parameter-dependent fields directly to the corresponding final time adjoints.
The numerical experiments of~\Cref{sec:numerical-experiments} compare these approaches and illustrate the advantages and limitations of each methodology.

\section{Numerical experiments}\label{sec:numerical-experiments}
In order to evaluate the practical performance of the proposed approach, we focus on two numerical examples: The first example in~\Cref{ssec:HGC} is similar to the one we examined theoretically in~\Cref{sec:gdc}.
The second test case in~\Cref{sec:ball-example} is more involved as it considers a localized control region, a moving ball as target state and a discontinuous diffusivity field where the location of the discontinuity changes with the parameter as well.
\par
Before discussing the numerical experiments in detail, we first introduce the methods against which we compare our approach in~\Cref{ssec:CM} and then provide implementation details in~\Cref{sec:implementation-details}.

\subsection{Methods considered for comparison}\label{ssec:CM}
In the following subsections, we summarize several alternative methods presented in the literature to which we compare our~U-Net based approach.
These include linear reduced models as discussed above as well as nonlinear methods based on autoencoder architectures.
Here, we aim to briefly discuss the gains and difficulties present in each approach.
This comparison allows us to assess separately the influence of reduced-order modeling and nonlinear representation versus the framework given by the~U-Net on the resulting approximation quality and computational performance.

\subsubsection{Linear reduced model and basis generation}
The first method to which we compare our approach is the reduced basis method presented in~\Cref{sec:linear-reduced-model}.
In order to select the reduced basis we consider a (weak) greedy procedure as described in detail in~\cite{lazar2016greedy}.
The basis vectors chosen by the greedy algorithm coincide with solution snapshots (i.e.~final time adjoints) selected according to an a posteriori error estimator.
The greedy approach provides approximation rates comparable to optimal ones, expressed in terms of Kolmogorov widths.
We do not describe the method in detail here; rather, we refer the interested reader to~\cite{JL21}.
In the following, the reduced basis model constructed by the greedy algorithm is referred to as~RB-ROM.
\par
At this point, we also remark that other algorithms to compute a reduced basis, such as the proper orthogonal decomposition~\cite{sirovich1987turbulence}, can be applied as well.
However, the reduced-order model is limited by the linearity of the ansatz space, and this limitation is independent of the method used to choose the reduced space.
We therefore focus here on the greedy method and only present the results obtained by that approach.
\par
The performance of the~RB-ROM is closely related to the approximation properties of the manifold of final time adjoints.
In particular, when the manifold's Kolmogorov widths decay slowly, a large number of basis functions is required to achieve a prescribed accuracy.
\par
In addition to the reduced basis approximation obtained from solving the reduced version of the optimality system~\eqref{eq:opt_sys_a}, we also consider the orthogonal projection of the exact final time adjoint onto the reduced space.
More precisely, given a reduced space~$\lH_N$, we compute the orthogonal projection~$P_{\lH_N}\varphi_{\mu}^*(T) \in \lH_N$ which is the unique element minimizing the approximation error~$\lVert\varphi_\mu^*(T) - v\rVert$ for~$v\in \lH_N$.
Consequently, this projection represents the best approximation that can be achieved within the chosen reduced space. 
\par
We emphasize that the latter approach is neither efficient nor intended for practical computations, as it requires knowledge of the solution itself, namely the optimal final time adjoint~$\varphi_{\mu}^*(T)$.
In this respect, it differs fundamentally from the reduced basis techniques discussed above, where the approximation is obtained by solving a small linear system in the reduced space, without need for the high-fidelity solution.
We present this approach solely to illustrate the approximation capability of the reduced space and to separate the error introduced by the subspace from the additional error coming from the reduced modeling.

\subsubsection{Autoencoder and parameter-to-latent map}\label{sec:autoencoders-param-to-latent}
The authors in~\cite{franco2023deep,fresca2021comprehensive} proposed to train an autoencoder on a set of solution snapshots that aims to approximate the identity map on the training data.
While doing so, the data is compressed into a low-dimensional representation using the encoding part~$\Phi_\mathrm{enc}\colon\R^n\to\R^N$ of the autoencoder with latent dimension~$N\ll n=\dim\lH$.
The decoder part~$\Phi_\mathrm{dec}\colon\R^N\to\R^n$ then reconstructs an approximation of the input data from the latent code produced by the encoder.
In the case of spatial data, convolutional layers are typically used to take spatial relations into account for encoding and decoding. 
\par
For a parameter~$\mu\in\params$, the autoencoder thus approximates the final time adjoint as
\begin{align}\label{eq:AE_approx}
    \varphi_\mu^*(T) \approx \Phi_\mathrm{dec}(\Phi_\mathrm{enc}(\varphi_\mu^*(T))).
\end{align}
Of course, this approach is not directly feasible, as this approximation requires knowledge of the final time adjoint itself.
Therefore, an additional neural network~$\Phi_\mathrm{pred}\colon\params\to\R^N$ is trained that maps a parameter to the corresponding latent representation obtained by passing the true solution through the encoder.
During the online phase, a new parameter is passed through the small neural network producing the latent code.
The decoder is then applied to the latent representation to reconstruct the approximate solution in the high-dimensional solution space.
For a parameter~$\mu\in\params$, the approximate final time adjoint is therefore given as~$\Phi_\mathrm{dec}(\Phi_\mathrm{pred}(\mu))\in\R^n$.
Note that here as an input we have just a parameter value, unlike in~\eqref{eq:AE_approx}, where the input was (an unknown) final time adjoint.
\par
The autoencoder can be interpreted as a nonlinear method for data compression or dimensionality reduction, whereas the approach presented in the previous subsection constitutes its linear counterpart.
In the numerical examples  we refer to this approach as~CNN-AE-ROM.
\par
We highlight that the autoencoders and parameter-to-latent approach is oriented to beat the linear reduced model approach in the sense that the decoder parametrizes a nonlinear approximation manifold rather than a fixed linear subspace.
In particular, this is relevant for solution sets that are known to be poorly approximated by low-dimensional linear spaces.
In these cases, a nonlinear latent representation can in principle encode the dominant geometric variability with fewer degrees of freedom than a reduced basis.
\par
Nevertheless, this additional flexibility does not completely remove the difficulty in the transport-dominated setting considered in this manuscript.
The autoencoder learns a nonlinear representation of the final time adjoint manifold, but the online approximation is still produced from a finite-dimensional parameter-to-latent map.
In particular, it does not explicitly use the spatial structure of the target, the propagated initial datum, or the parameter-dependent coefficients as input fields.
This limits its ability to accurately resolve the sharp spatial displacements and localized features that arise in the examples below.
\par
Similar to considering the orthogonal projection onto the reduced space in the previous section, we also compute an upper bound for the error that can be achieved with the~CNN-AE-ROM by performing a forward pass through the autoencoder.
So instead of using the parameter-to-latent map, we compute the latent representation via passing the true final time adjoint to the encoder.
In contrast to the orthogonal projection this only provides an upper bound for the error reachable by the decoder.
It nevertheless allows us to distinguish between the autoencoder and the parameter-to-latent map as the main source of error.
We refer to the results obtained by a forward pass of the exact final time adjoint through the autoencoder as~CNN-AE.
Since it requires the optimal final time adjoint, this method is again not suitable for practical use and only serves for comparison.

\subsubsection{Parametric decoder}\label{sec:parametric-decoders}
The parametric decoder approach is a variant of the autoencoder method described in the previous subsection.
Instead of training a full autoencoder and learning a parameter-to-latent map, we here only learn a decoder~$\Phi_\mathrm{p-dec}\colon\params\to\R^n$ (referred to as ``parametric decoder'' below) that takes the parameter as input and approximates the mapping
\begin{align*}
    \mu\longmapsto\varphi_\mu^*(T).
\end{align*}
The architecture of the parametric decoder is chosen exactly the same as for the decoder part of the autoencoder above (using~$N=\dim(\params)$ as input dimension), namely as one fully-connected layer followed by transposed convolutional layers.
In contrast to the previous approach, no encoder or latent space is introduced during training.
\par
This architecture serves as a natural point of comparison with the autoencoder-based method.
Since both approaches employ the same decoder architecture, the comparison isolates the role of the learned nonlinear latent representation.
The parametric decoder avoids the additional optimization problem of constructing a suitable latent representation and can therefore be viewed as a compressed version of the autoencoder approach that merges the parameter-to-latent map and the decoder into a single network.
As the numerical experiments demonstrate, however, separating these two tasks and allowing for a latent representation different from the parameter itself leads to slightly better approximation accuracy in practice.

\subsection{Implementation details}\label{sec:implementation-details}
In the following section we provide several details regarding the numerical experiments and the implementation of the previously described methods.
\paragraph{Discretization of the FOM.}
In both examples we consider as spatial domain the unit square~$\Omega=(0,1)^2$ and discretize the respective heat equation in space with a~5-point finite difference Laplacian on a uniform~$64\times 64$~grid leading to~$n=4096$ inner degrees of freedom.
The diffusivity interface in the second example is handled via harmonic averaging on cell faces.
For temporal discretization we apply~$n_t=400$ steps of the~L-stable~BDF-2 time stepper with a single backward Euler step for initialization of the~BDF-2 solver.
We precompute the~LU-decompositions of the respective matrices for time stepping and reuse them whenever possible.
The~FOM linear system for the final time adjoint is solved using the conjugate gradients method~(CG)~\cite{saad2003iterative} with a relative residual tolerance of~$10^{-8}$ and a maximum of~$500$ iterations.
\paragraph{Training and test parameters.}
For the construction of the different surrogate models described above, we consider~$n_\mathrm{train}\in\{100,200,500,1000\}$ independent and uniformly drawn training parameters.
Further, another~$100$ parameters are used as validation parameters for the neural network training.
To make the comparison fair, the validation parameters are also used in the greedy selection, such that the greedy is based on the same~$1100$ parameters as the machine learning-based approaches (it does not require validation parameters).
Another~$100$ independently and uniformly chosen parameters are employed during testing.
The parameter sets (training and test) are the same for all considered methods.
\paragraph{Error metrics.}
We consider the following three error metrics measuring average errors in the final time adjoints, terminal state and control over the test parameter set~$\params_\mathrm{test}\subset\params$, where~$\hat{\varphi}_\mu(T)$, $\hat{y}_\mu(T)$ and~$\hat{u}_\mu$ denote approximate final time adjoint, terminal state and control of the considered surrogate:
\begin{align*}
    \bar{\varepsilon}_\varphi &\coloneqq \frac{1}{|\params_{\mathrm{test}}|}\sum\limits_{\mu\in\params_{\mathrm{test}}}\frac{\lVert\varphi_\mu^*(T)-\hat{\varphi}_\mu(T)\rVert_{L^2(\Omega)}}{\lVert\varphi_\mu^*(T)\rVert_{L^2(\Omega)}}, \\
    \bar{\varepsilon}_y &\coloneqq \frac{1}{|\params_{\mathrm{test}}|}\sum\limits_{\mu\in\params_{\mathrm{test}}}\lVert y_\mu^*(T)-\hat{y}_\mu(T)\rVert_{L^2(\Omega)}, \\
    \bar{\varepsilon}_u &\coloneqq \frac{1}{|\params_{\mathrm{test}}|}\sum\limits_{\mu\in\params_{\mathrm{test}}}\frac{\lVert u_\mu^*-\hat{u}_\mu\rVert_{L^2([0,T],\mathcal{U})}}{\lVert u_\mu^*\rVert_{L^2([0,T],\mathcal{U})}}.
\end{align*}
We remark at this point that~$\bar{\varepsilon}_y$ is reported using the absolute~$L^2$-norm in space because the true target states~$y_\mu^T$ only differ by translations and are therefore independent of~$\mu$.
\paragraph{Neural network training and architectures.}
For the neural network training, Adam~\cite{kingma2015adam} was used as optimizer with an initial learning rate of~$10^{-3}$.
We further minimize the mean squared error loss in the final time adjoint and the latent representation, respectively.
For all neural networks, we perform early-stopping based on the validation loss with a patience of~$1000$ epochs.
Due to early-stopping, the training of all methods stopped well before reaching the maximum number of training iterations ($3000$ for autoencoders and the parameter-to-latent regressors, $10000$ for the~U-Nets and the parametric decoder).
In neural network training, we perform three restarts using random initializations of weights and biases.
Further, we consider three random splits of the available data into training and validation data.
The numbers presented below refer to the best results over the nine training runs chosen according to the smallest validation loss in the parameter-to-latent regressor.
Our numerical experiments showed that the autoencoder performance was less influenced by the latent dimension than the parameter-to-latent map, which is why we used the validation loss of the latter to determine the latent dimension.
\par
For the parametric decoder, we consider a fully-connected layer mapping from the parameter space to~$\R^{128\cdot4\cdot4}$ followed by four~$4\times4$~transposed-convolutions with input channels~$128$, $64$, $32$, $16$ and output channels~$64$, $32$, $16$, $1$, stride~$2$, padding~$1$, and~GELU activations between all but the last layer.
Finally, another transposed-convolutional layer producing a single output layer is used.
\par
For the convolutional autoencoder, the following architecture is used: The encoder consists of four~$3\times 3$~convolutional layers with channels~$1$, $16$, $32$, $64$ and~$128$, stride~$2$, padding~$1$ and~GELU activations.
The bottleneck therefore has a spatial size of~$4\times 4$ and therefore a flattened dimension of~$128\cdot4\cdot4 = 2048$, which is mapped with a fully-connected layer to~$\R^N$ with latent dimension~$N\in\{8,16,32,64\}$.
The decoder mirrors the encoder using~$4\times 4$ transposed convolutions with stride~$2$.
The latent regressor is a multi-layer perceptron mapping from~$\params$ to~$\R^N$ with two~GELU hidden layers of width~$256$ each.
\par
The~U-Net architecture is described in detail in~\Cref{sec:u-net-architecture} and shown in~\Cref{fig:u-net-architecture}.
\paragraph{Hardware and software used for the experiments.}
All experiments were performed on a system running Ubuntu~24.04.4, equipped with an~AMD~EPYC~9654~96-Core Processor with~2.2~TB of~RAM.
Neural network training was additionally performed using an~NVIDIA~H200~GPU with~141~GB of~HBM3e memory on the same machine\footnote{We remark at this point that all experiments can also be performed on smaller systems with less computational power.}.
\par
All algorithms are implemented in the \texttt{Python} programming language using~\texttt{numpy}~\cite{harris2020array} and~\texttt{scipy}~\cite{virtanen2020SciPy} for efficient discretization of the~PDEs and optimal control systems as well as~\texttt{PyTorch}~\cite{paszke2019pytorch} for neural network training.
The sourcecode used to derive the results reported in the next sections is provided in~\cite{sourcecode}.

\subsection{Globally distributed control with moving targets}\label{ssec:HGC}
Our first numerical example is motivated by the theoretical investigations from~\Cref{sec:transport-dominated-effect-moving-target}.
\par
Let~$\Omega=(0,1)^2$, $T=1$, and consider the parameter domain
\begin{align*}
    \params =[0.25,0.75]^2 \subset\R^2.
\end{align*}
For each parameter value~$\mu=(\mu_1,\mu_2)\in\params$, we consider the optimal control problem governed by the heat equation
\begin{equation*}
    \left\{
    \begin{aligned}
        \partial_t y_\mu-\Delta y_\mu &= u_\mu && \text{in }(0,T)\times\Omega,\\
        y_\mu &= 0 && \text{on }(0,T)\times\partial\Omega, \\
        y_\mu(0,x,y) &= 0 && \text{for }(x,y)\in\Omega.
    \end{aligned}
    \right.
\end{equation*}
The control acts on the whole domain, that is, $u_\mu\in L^2(0,T;\Omega)$.
\par
The objective is to steer the state towards a parameter-dependent target state~$y_\mu^T$ by minimizing
\begin{align*}
    J_\mu(u) = \frac{\alpha}{2} \lVert y_\mu(T)-y_\mu^T\rVert_{L^2(\Omega)}^2 + \frac{1}{2}\lVert u\rVert_{L^2(0,T;\Omega)}^2
\end{align*}
for~$\alpha=1000$. To generate a transport-dominated target manifold, we consider translations of a discontinuous rectangular profile. Let~$\ell_1=0.30$ and~$\ell_2=0.20$ denote the side lengths of the rectangle. The parameter-dependent target is obtained by translating the rectangle according to the parameter, that is
\begin{align*}
    y_\mu^T(x) = \mathbf 1_{Q_\ell(\mu)}(x),
\end{align*}
where
\begin{align*}
    Q_\ell(\mu) = \left\{x=(x_1,x_2)\in\Omega\colon |x_1-\mu_1|\leq \frac{\ell_1}{2}\text{ and }|x_2-\mu_2|\leq \frac{\ell_2}{2}\right\}.
\end{align*}
The optimality system is given by
\begin{equation}\label{eq:optimality_global}
    \left\lbrace
    \begin{aligned}
        \partial_t y_\mu-\Delta y_\mu &= u_\mu && \text{in }(0,T)\times\Omega,\\
        -\partial_t \varphi_\mu-\Delta \varphi_\mu &= 0 && \text{in }(0,T)\times\Omega,\\
        \varphi_\mu(T) &= \alpha \big(y_\mu(T)-y_\mu^T\big) && \text{in }\Omega,\\
        u_\mu &= -\varphi_\mu && \text{in }(0,T)\times\Omega,
    \end{aligned}
    \right.
\end{equation}
together with homogeneous Dirichlet boundary conditions for both~$y_\mu$ and~$\varphi_\mu$ and zero initial condition for~$y_\mu$.
\par
For each training set size~$n_{\mathrm{train}}\in\{100,200,500,1000\}$, we generate snapshots by solving~\eqref{eq:optimality_global} for parameter values sampled uniformly from~$\params$.
These snapshots are used to train and evaluate the surrogate models described in the previous subsections.
We first study how the approximation accuracy scales with the number of training snapshots.
The corresponding average relative errors in the final time adjoint on a set of~$100$ uniform randomly drawn test parameters are reported in~\Cref{fig:rect_full-scaling}.
We remark here that for the autoencoders we tried different latent dimensions, namely~$8$, $16$, $32$ and~$64$, and only report the best results according to the validation loss of the parameter-to-latent regressor.
In this example, for~$n_\mathrm{train}\in\{100,200\}$ a latent dimension of~$N=16$ was chosen and for~$n_\mathrm{train}\in\{500,1000\}$ the validation loss indicated to use~$N=8$.
For the reduced space in the~RB-ROM we chose a dimension of~$64$, thus matching the largest latent dimension of the autoencoder.
\begin{figure}[htbp]
    \centering
    \begin{tikzpicture}
        \begin{loglogaxis}[
            width=0.65\linewidth, height=0.45\linewidth,
            xlabel={$n_\mathrm{train}$}, ylabel={$\bar{\varepsilon}_\varphi$},
            legend pos=outer north east, legend cell align=left,
            grid=both, minor tick num=1,
            cycle list name=color list,
            ]
            \addplot[mark=*, thick, unet] table[x=n, y=unet_mean] {data/rect_full/scaling_curve.dat};
            \addlegendentry{U-Net}
            \addplot[mark=triangle*, thick, paramdec] table[x=n, y=param_dec_mean] {data/rect_full/scaling_curve.dat};
            \addlegendentry{Parametric decoder}
            \addplot[mark=square*, thick, cnnaerom] table[x=n, y=cnn_ae_rom_mean] {data/rect_full/scaling_curve.dat};
            \addlegendentry{CNN-AE-ROM (best dim.)}
            \addplot[mark=pentagon*, thick, rbrom] table[x=n, y=greedy_rb_rom_mean] {data/rect_full/scaling_curve.dat};
            \addlegendentry{RB-ROM}
            \addplot[mark=square*, thick, dashed, cnnae] table[x=n, y=cnn_ae_mean] {data/rect_full/scaling_curve.dat};
            \addlegendentry{CNN-AE (no ROM)}
            \addplot[mark=pentagon*, thick, dashed, orthproj] table[x=n, y=greedy_proj_mean] {data/rect_full/scaling_curve.dat};
            \addlegendentry{Orthogonal projection}
        \end{loglogaxis}
    \end{tikzpicture}
    \caption{Average relative error~$\bar{\varepsilon}_\varphi$ in the final time adjoint over~$100$ uniform randomly chosen test parameters depending on the training set size~$n_\mathrm{train}$ for the example with a moving rectangular target and global control domain.}
    \label{fig:rect_full-scaling}
\end{figure}
\par
\Cref{fig:rect_full-scaling} shows that the~U-Net provides the most accurate approximation of the final time adjoint for all training set sizes.
Already for~$n_{\mathrm{train}}=100$, the relative error is around~$6\cdot10^{-4}$, and the error decreases further to roughly~$10^{-4}$ as the number of training samples increases to~$n_{\mathrm{train}}=1000$.
This indicates that learning the solution operator from the parameter-dependent fields is particularly effective for this example and requires much less training data compared to the other approaches.
\par
The linear methods show a very different behavior.
Both the~RB-ROM and the orthogonal projection remain at errors of order~$10^{-1}$, essentially independently of~$n_{\mathrm{train}}$.
This is consistent with the transport-dominated structure of the target manifold.
Since the targets are translated discontinuous rectangles, the corresponding solution manifold is hard to approximate by low-dimensional linear spaces.
Hence, increasing the number of snapshots does not remove the main obstruction, which is the linearity of the approximation space.
\par
The autoencoder-based methods partially overcome this limitation.
In particular, the~CNN-AE without parameter-to-latent map improves significantly as more training data are added.
We emphasize, however, that this is not a surrogate model, since the autoencoder is applied directly to the true final time adjoint and therefore receives the exact latent representation.
Consequently, these results assess only the reconstruction capability of the learned nonlinear decoder.
The considerably larger errors of the~CNN-AE-ROM indicate that the main difficulty lies in predicting this latent representation from the parameter rather than reconstructing the solution once the latent variables are known.
Nevertheless, even when provided with the exact latent representation, the autoencoder remains less accurate than the~U-Net.
The parametric decoder also improves with the training set size, but its error stays relatively large.
This suggests that directly learning the map~$\mu\mapsto \varphi_\mu^*(T)$ does not exploit the full structure of the optimality system as effectively as the field-based~U-Net surrogate.
\par
While~\Cref{fig:rect_full-scaling} illustrates how the approximation accuracy evolves with the size of the training set, \Cref{tab:rect_full-summary} provides a quantitative comparison of the surrogate models.
Besides the relative error in the final time adjoint, we also report the corresponding errors in the recovered optimal state and optimal control, thereby assessing how the approximation quality propagates through the optimality system.
\begin{table}[htbp]
    \centering
    \begin{tabular}{llcccc}
        \toprule
        Method & Metric & $n_\mathrm{train}=100$ & $n_\mathrm{train}=200$ & $n_\mathrm{train}=500$ & $n_\mathrm{train}=1000$ \\
        \midrule\midrule
        U-Net & $\bar{\varepsilon}_\varphi$ & $5.89\cdot 10^{-4}$ & $2.86\cdot 10^{-4}$ & $1.82\cdot 10^{-4}$ & $1.21\cdot 10^{-4}$ \\
         & $\bar{\varepsilon}_y$ & $3.42\cdot 10^{-4}$ & $8.28\cdot 10^{-5}$ & $7.49\cdot 10^{-5}$ & $6.61\cdot 10^{-5}$ \\
         & $\bar{\varepsilon}_u$ & $8.54\cdot 10^{-4}$ & $3.14\cdot 10^{-4}$ & $2.13\cdot 10^{-4}$ & $1.60\cdot 10^{-4}$ \\
        \midrule
        Parametric decoder & $\bar{\varepsilon}_\varphi$ & $6.60\cdot 10^{-1}$ & $5.28\cdot 10^{-1}$ & $3.96\cdot 10^{-1}$ & $3.25\cdot 10^{-1}$ \\
         & $\bar{\varepsilon}_y$ & $5.67\cdot 10^{-2}$ & $3.57\cdot 10^{-2}$ & $2.47\cdot 10^{-2}$ & $2.07\cdot 10^{-2}$ \\
         & $\bar{\varepsilon}_u$ & $5.71\cdot 10^{-1}$ & $4.52\cdot 10^{-1}$ & $3.38\cdot 10^{-1}$ & $2.78\cdot 10^{-1}$ \\
        \midrule
        CNN-AE-ROM & $\bar{\varepsilon}_\varphi$ & $5.58\cdot 10^{-1}$ & $4.23\cdot 10^{-1}$ & $2.81\cdot 10^{-1}$ & $2.76\cdot 10^{-1}$ \\
         & $\bar{\varepsilon}_y$ & $3.67\cdot 10^{-2}$ & $2.68\cdot 10^{-2}$ & $2.80\cdot 10^{-2}$ & $2.81\cdot 10^{-2}$ \\
         & $\bar{\varepsilon}_u$ & $4.78\cdot 10^{-1}$ & $3.61\cdot 10^{-1}$ & $2.45\cdot 10^{-1}$ & $2.44\cdot 10^{-1}$ \\
        \midrule
        RB-ROM & $\bar{\varepsilon}_\varphi$ & $7.16\cdot 10^{-1}$ & $7.22\cdot 10^{-1}$ & $7.15\cdot 10^{-1}$ & $7.34\cdot 10^{-1}$ \\
         & $\bar{\varepsilon}_y$ & $1.69\cdot 10^{-2}$ & $1.77\cdot 10^{-2}$ & $1.71\cdot 10^{-2}$ & $1.81\cdot 10^{-2}$ \\
         & $\bar{\varepsilon}_u$ & $6.05\cdot 10^{-1}$ & $6.10\cdot 10^{-1}$ & $6.04\cdot 10^{-1}$ & $6.21\cdot 10^{-1}$ \\
        \midrule\midrule
        CNN-AE & $\bar{\varepsilon}_\varphi$ & $5.12\cdot 10^{-1}$ & $3.08\cdot 10^{-1}$ & $1.01\cdot 10^{-1}$ & $3.40\cdot 10^{-2}$ \\
         & $\bar{\varepsilon}_y$ & $3.49\cdot 10^{-2}$ & $2.13\cdot 10^{-2}$ & $1.37\cdot 10^{-2}$ & $1.03\cdot 10^{-2}$ \\
         & $\bar{\varepsilon}_u$ & $4.39\cdot 10^{-1}$ & $2.64\cdot 10^{-1}$ & $9.15\cdot 10^{-2}$ & $3.55\cdot 10^{-2}$ \\
        \midrule
        Orthogonal projection & $\bar{\varepsilon}_\varphi$ & $6.90\cdot 10^{-1}$ & $6.97\cdot 10^{-1}$ & $6.93\cdot 10^{-1}$ & $7.08\cdot 10^{-1}$ \\
         & $\bar{\varepsilon}_y$ & $6.05\cdot 10^{-2}$ & $5.44\cdot 10^{-2}$ & $5.32\cdot 10^{-2}$ & $6.27\cdot 10^{-2}$ \\
         & $\bar{\varepsilon}_u$ & $6.04\cdot 10^{-1}$ & $6.08\cdot 10^{-1}$ & $6.02\cdot 10^{-1}$ & $6.20\cdot 10^{-1}$ \\
        \bottomrule
    \end{tabular}
    \caption{Accuracy of the surrogates on the example with a moving rectangular target and global control domain.}
    \label{tab:rect_full-summary}
\end{table}
\par
The values in~\Cref{tab:rect_full-summary} confirm the trends observed in~\Cref{fig:rect_full-scaling}.
For~$n_{\mathrm{train}}=1000$, the~U-Net reaches an average relative error of~$1.21\cdot 10^{-4}$ for the final time adjoint, while the best autoencoder-based method without parameter-to-latent map gives~$3.4\cdot 10^{-2}$.
The linear reduced models remain between~$10^{-1}$ and~$1$ for the adjoint error.
\par
The errors for the state and the control follow the same qualitative behavior.
Since the optimal control is recovered from the adjoint, a more accurate approximation of~$\varphi_\mu^*(T)$ directly improves the control approximation.
The state errors are generally smaller than the adjoint and control errors, which is expected due to the smoothing effect of the heat equation dynamics.
\par
The previous results demonstrate that the proposed surrogate models exhibit substantially different approximation capabilities.
However, accuracy alone is not sufficient when assessing their suitability for multi-query optimal control problems.
Since the primary motivation for surrogate modeling is to reduce the computational costs of repeated queries, it is equally important to compare the offline training effort and the online evaluation time.
These quantities are reported in~\Cref{tab:rect_full-runtime}.
\begin{table}[htbp]
    \centering
    \begin{tabular}{l|cccc|c}
        \toprule
        \multirow{2}{*}{Method} & \multicolumn{4}{c|}{Offline training times} & \multirow{2}{*}{Online times} \\
         & $n_\mathrm{train}=100$ & $n_\mathrm{train}=200$ & $n_\mathrm{train}=500$ & $n_\mathrm{train}=1000$ & \\
        \midrule\midrule
        U-Net & 23.3min & 40.0min & 48.2min & 50.5min & 0.70ms \\
        Parametric decoder & 4.2min & 8.5min & 16.9min & 29.4min & 0.15ms \\
        CNN-AE-ROM & 39.4min & 52.9min & 1.9h & 3.5h & 0.21ms \\
        RB-ROM & 3.6h & 5.5h & 10.9h & 20.6h & 16337.3ms \\
        \midrule
        \multicolumn{5}{r}{FOM solve (per query)} & 5517.4ms \\
        \multicolumn{5}{r}{State and control recovery} & 255.7ms \\
        \bottomrule
    \end{tabular}
    \caption{Per-method wall-clock costs on the example with a moving rectangular target and global control domain. The online times reported here refer to the average time to compute the approximate final time adjoint. The full online query costs also include the time to obtain the state and control from the final time adjoint (shown in the last row).}
    \label{tab:rect_full-runtime}
\end{table}
\par
\Cref{tab:rect_full-runtime} shows that the neural network surrogates have negligible inference times compared with the full-order solve.
Among all methods, the~U-Net gives the best balance between accuracy and online efficiency.
\par
In contrast, the~RB-ROM is not competitive for this benchmark.
It has a large offline cost and an online time that is even larger than solving the~FOM.
Moreover, this additional cost does not lead to improved accuracy, because the approximation remains restricted to a linear reduced space.
\par
All methods require an additional solve of the adjoint dynamical system to obtain the approximate control from the predicted final time adjoint.
This part dominates in particular the computational costs of the machine learning approaches during the online phase.
We therefore report these costs separately from the pure prediction of the final time adjoint.
\par
We remark at this point that the a posteriori error estimator can be evaluated efficiently at costs similar to one state recovery, i.e.~around~$256$ms.
Moreover, we observed for this experiment an over-estimator factor of at most~$10.41$, leading to an error estimate that is at about one order of magnitude larger than the true error in the final time adjoint.
\paragraph{Main conclusions for the rectangular target example.}
Overall, this example illustrates the limitations of linear reduced-order models for transport-dominated manifolds with slow decay of the Kolmogorov~$N$-width.
Nonlinear latent representations improve the approximation, but the best performance is obtained by the~U-Net, which directly learns the field-dependent solution operator.
This confirms that exploiting the structure of the parametric components defining the optimality system is more effective than learning only a parameter-to-solution map in this case.

\subsection{Locally distributed control with moving targets and parametric diffusivity}\label{sec:ball-example}
As a second example, we consider a more challenging setting including a localized control operator and a parametric diffusivity that will be described below.
\par
Let~$\Omega = (0,1)^2$, $T= 1$ and let~$\omega = [0.2,0.8]^2 \subset \Omega$ be the control region. We consider the parameter domain
\begin{align*}
    \params = [0.3,0.7]\times [0.25,0.75]^2\subset \mathbb R^3.
\end{align*}
For each parameter~$(\mu_1,\mu_2,\mu_3)\in\params$, we consider the optimal control problem described by the heat equation with parametric diffusivity
\begin{equation*}
    \left\{
    \begin{aligned}
        \partial_t y_\mu - \nabla\cdot(\kappa_\mu \nabla y_\mu) &= \mathbf{1}_\omega u_\mu, && \text{in } (0,T)\times (\Omega)\\
        y_\mu &= 0, && \text{on }(0,T) \times \partial \Omega, \\
        y_\mu(0,x,y) &= 0, && \text{for }(x,y)\in\Omega,
    \end{aligned}
    \right.
\end{equation*}
where~$\kappa_\mu\colon\Omega\to\R$ is the parameter-dependent diffusion coefficient defined for~$x=(x_1,x_2)\in\Omega$ as
\begin{align*}
    \kappa_\mu(x)=\begin{cases}
        1,&\text{if }x_1\leq\mu_1,\\
        10,&\text{if }x_1>\mu_1.
    \end{cases}
\end{align*}
The objective is to steer the state towards a parameter-dependent target state~$y_\mu^T$ by minimizing the quadratic functional
\begin{align*}
    J_\mu(u) = \frac{\alpha}{2} \lVert y_\mu(T) - y_\mu^T \rVert_{L^2(\Omega)}^2 + \frac{1}{2} \lVert u\rVert_{L^2(0,T;\omega)}^2
\end{align*}
for~$\alpha=10000$.
We are going to introduce a transport-dominated family of targets, which is defined by
\begin{align*}
    y_\mu^T (x) = \mathbf 1_{B_{0.15}(\mu)}(x),\quad\text{where}\ B_{0.15}(\mu)=\left\{x\in\Omega: \lVert x-(\mu_2,\mu_3)\rVert_2\leq 0.15\right\}.
\end{align*}
For each parameter value, the optimality system is given by
\begin{equation}\label{eq:optimality_heat}
    \left\lbrace
    \begin{aligned}
        \partial_t y_\mu - \nabla\cdot(\kappa_\mu\nabla y_\mu) &= \mathbf{1}_\omega u_\mu && \text{in }(0,T)\times\Omega,\\
        -\partial_t \varphi_\mu - \nabla\cdot(\kappa_\mu\nabla \varphi_\mu) &= 0 && \text{in }(0,T)\times\Omega,\\
        \varphi_\mu(T) &= \alpha \big(y_\mu(T)-y_\mu^T\big) && \text{in }\Omega,\\
        u_\mu &= -\mathbf{1}_\omega \varphi_\mu && \text{in }(0,T)\times\omega,
    \end{aligned}
    \right.
\end{equation}
together with homogeneous Dirichlet boundary conditions for both~$y_\mu$ and~$\varphi_\mu$ and the initial condition~$y_\mu(0)=0$.
\par
As in the previous section, we generate for each training set size~$n_{\mathrm{train}}\in\{100,200,500,1000\}$ snapshots by solving~\eqref{eq:optimality_heat} for parameter values sampled from~$\params$ and collect them in the training set for the surrogate models.
We first illustrate the quality of the~U-Net predictions on representative parameters before comparing the different surrogate models quantitatively.
\begin{figure}[H]
  \centering
  \newcommand{\PhiMinA}{-3200}\newcommand{\PhiMaxA}{ 2900}\newcommand{\ErrMaxA}{ 20.0}
  \newcommand{\PhiMinB}{-6400}\newcommand{\PhiMaxB}{ 5280}\newcommand{\ErrMaxB}{330.2}
  \newcommand{\PhiMinC}{-5310}\newcommand{\PhiMaxC}{ 4260}\newcommand{\ErrMaxC}{ 20.9}
  \pgfplotsset{
    panel/.style={
      width=2.6cm, height=2.6cm,
      xmin=0, xmax=1, ymin=0, ymax=1,
      enlargelimits=false,
      axis on top, scale only axis,
      xtick={0,0.5,1}, ytick={0,0.5,1},
      tick label style={font=\tiny},
      title style={font=\small, yshift=-3pt},
    },
    cbar/.style={
      colorbar, colorbar style={width=4pt, font=\tiny, anchor=west, at={(1.04,0.5)}, yticklabel style={xshift=-2pt}}
    },
  }
  \resizebox{\textwidth}{!}{
  \begin{tikzpicture}
      \begin{groupplot}[
          group style={group size=5 by 3,
                       horizontal sep=15pt, vertical sep=14pt,
                       xticklabels at=edge bottom,
                       yticklabels at=edge left},
          panel,
      ]
      \nextgroupplot[title={$\kappa_\mu$},
                     ylabel={$\mu^{(1)}$},
                     ylabel style={align=center, font=\small}]
      \addplot graphics [xmin=0, xmax=1, ymin=0, ymax=1]
              {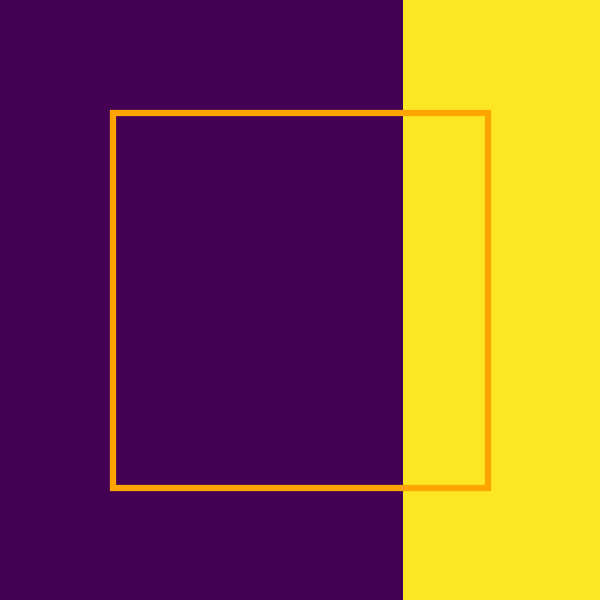};
      \nextgroupplot[title={$y_\mu^T$}]
      \addplot graphics [xmin=0, xmax=1, ymin=0, ymax=1]
              {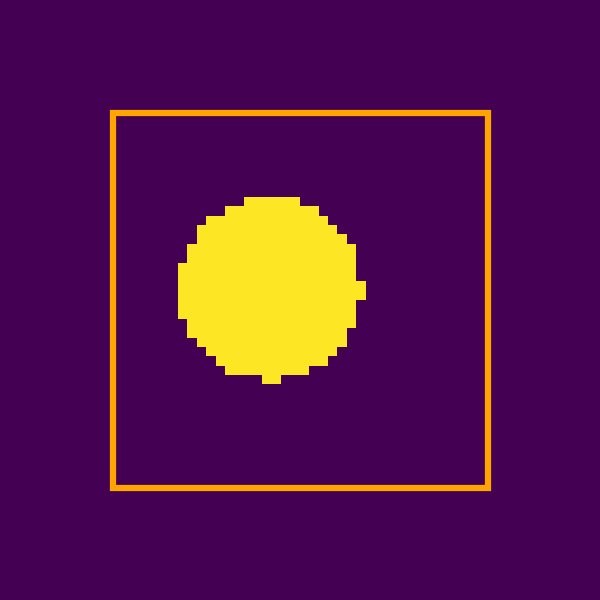};
      \nextgroupplot[title={$\varphi_\mu^*(T)$ (FOM)}]
      \addplot graphics [xmin=0, xmax=1, ymin=0, ymax=1]
              {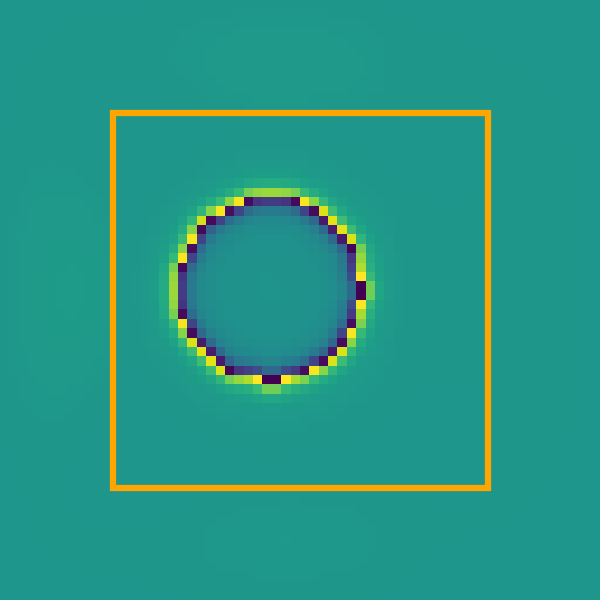};
      \nextgroupplot[title={$\widehat\varphi_\mu(T)$ (U-Net)},
                     colormap/viridis, cbar,
                     point meta min=\PhiMinA, point meta max=\PhiMaxA, colorbar style={scaled y ticks=base 10:-3, y tick scale label style={xshift=20pt}}]
      \addplot graphics [xmin=0, xmax=1, ymin=0, ymax=1]
              {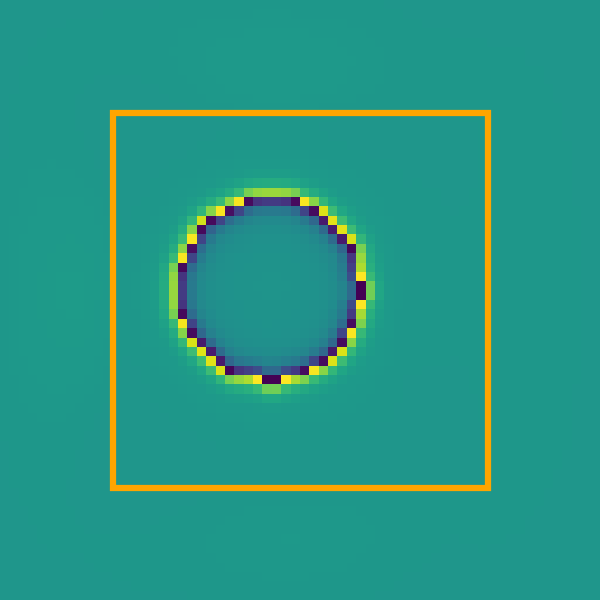};
      \nextgroupplot[title={$|\widehat\varphi_\mu(T) - \varphi_\mu^*(T)|$},
                     colormap/hot, cbar,
                     point meta min=0, point meta max=\ErrMaxA, xshift=15pt]
      \addplot graphics [xmin=0, xmax=1, ymin=0, ymax=1]
              {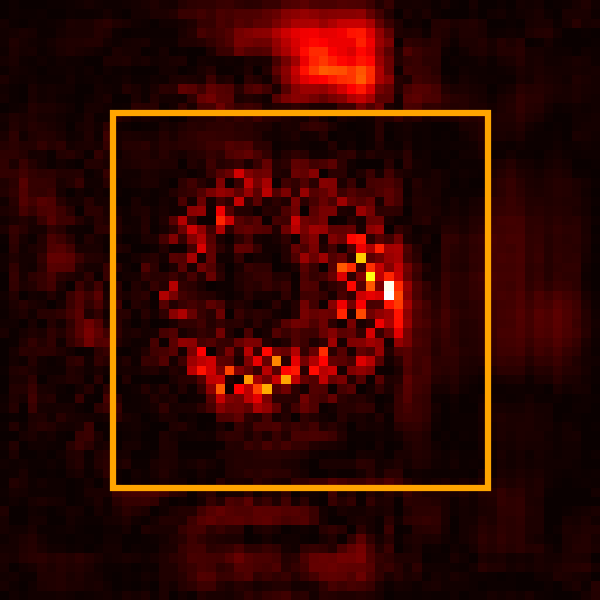};
    
      \nextgroupplot[ylabel={$\mu^{(2)}$},
                     ylabel style={align=center, font=\small}]
      \addplot graphics [xmin=0, xmax=1, ymin=0, ymax=1]
              {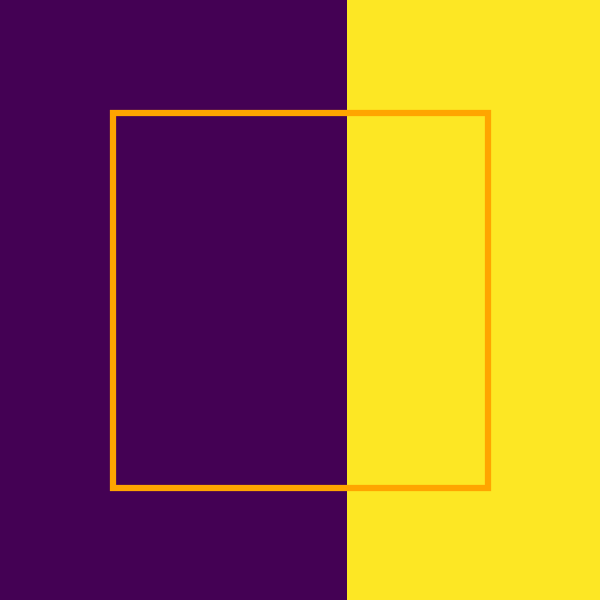};
      \nextgroupplot \addplot graphics [xmin=0, xmax=1, ymin=0, ymax=1]
              {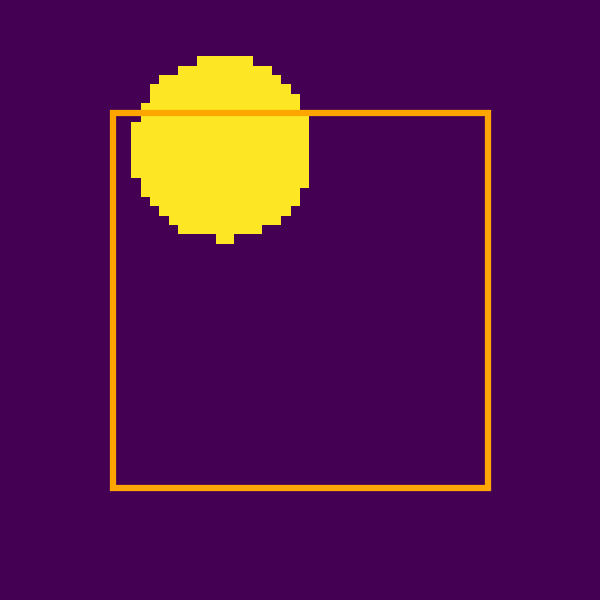};
      \nextgroupplot \addplot graphics [xmin=0, xmax=1, ymin=0, ymax=1]
              {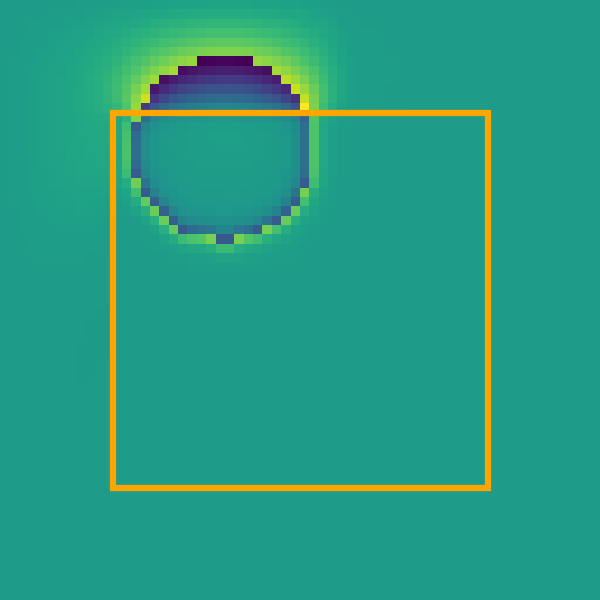};
      \nextgroupplot[colormap/viridis, cbar,
                     point meta min=\PhiMinB, point meta max=\PhiMaxB, colorbar style={scaled y ticks=base 10:-3, y tick scale label style={xshift=20pt}}]
      \addplot graphics [xmin=0, xmax=1, ymin=0, ymax=1]
              {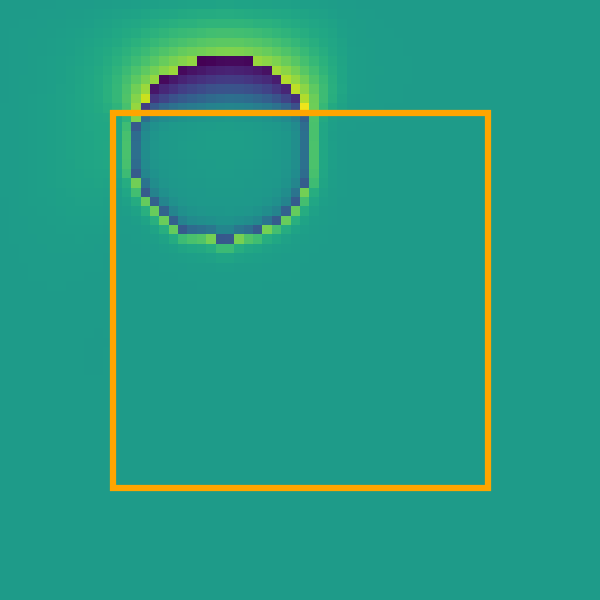};
      \nextgroupplot[colormap/hot, cbar,
                     point meta min=0, point meta max=\ErrMaxB]
      \addplot graphics [xmin=0, xmax=1, ymin=0, ymax=1]
              {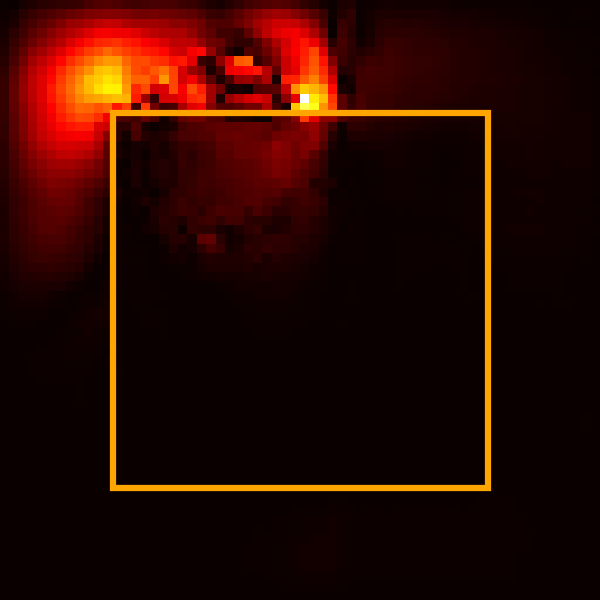};
    
      \nextgroupplot[ylabel={$\mu^{(3)}$},
                     ylabel style={align=center, font=\small}]
      \addplot graphics [xmin=0, xmax=1, ymin=0, ymax=1]
              {figures/ball_interface_joint_square/_mu208_kappa.png};
      \nextgroupplot \addplot graphics [xmin=0, xmax=1, ymin=0, ymax=1]
              {figures/ball_interface_joint_square/_mu208_xT.png};
      \nextgroupplot \addplot graphics [xmin=0, xmax=1, ymin=0, ymax=1]
              {figures/ball_interface_joint_square/_mu208_phi_fom.png};
      \nextgroupplot[colormap/viridis, cbar,
                     point meta min=\PhiMinC, point meta max=\PhiMaxC, colorbar style={scaled y ticks=base 10:-3, y tick scale label style={xshift=20pt}}]
      \addplot graphics [xmin=0, xmax=1, ymin=0, ymax=1]
              {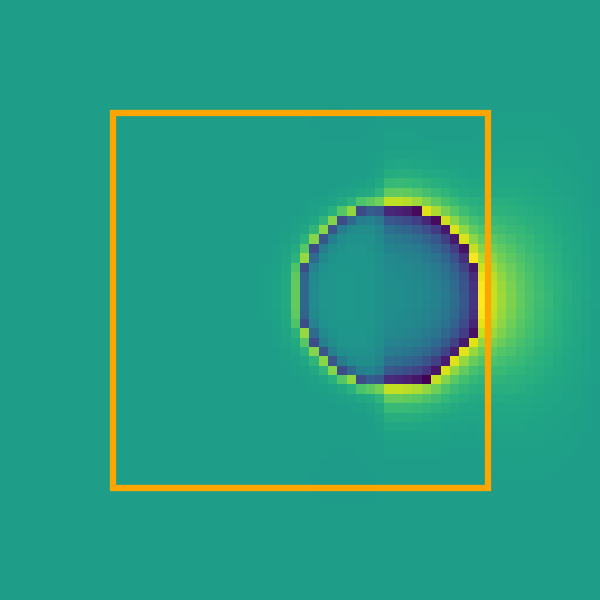};
      \nextgroupplot[colormap/hot, cbar,
                     point meta min=0, point meta max=\ErrMaxC]
      \addplot graphics [xmin=0, xmax=1, ymin=0, ymax=1]
              {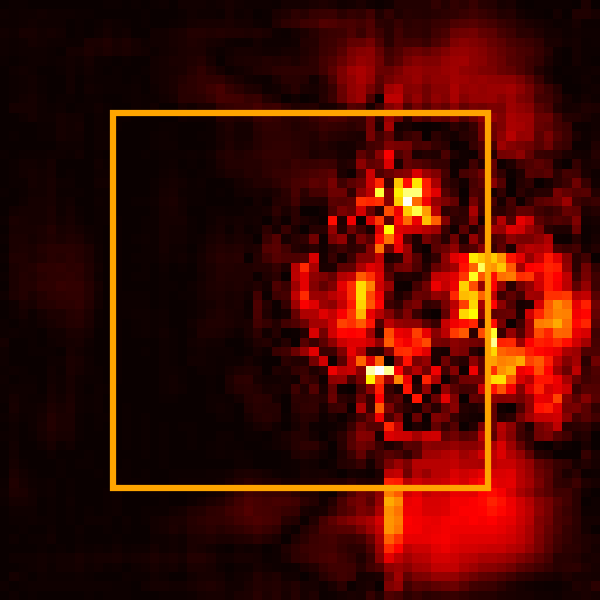};
      \end{groupplot}
  \end{tikzpicture}
  }
  \caption{Results of the U-Net surrogate for three test parameters~$\mu^{(1)}=[0.6708, 0.4508, 0.5183]$ (rel.~$L^2(\Omega)$-err.~in final time adjoint: $4.64\cdot10^{-3}$), $\mu^{(2)}=[0.5747, 0.3731, 0.7489]$ (rel.~$L^2(\Omega)$-err.~in final time adjoint: $4.88\cdot10^{-2}$) and~$\mu^{(3)}=[0.6416, 0.6476, 0.5066]$ (rel.~$L^2(\Omega)$-err.~in final time adjoint: $6.19\cdot10^{-3}$). Columns: diffusivity~$\kappa$, target~$y_\mu^T$, FOM~final time adjoint~$\varphi_\mu^*(T)$, U-Net prediction~$\widehat\varphi_\mu(T)$, and pointwise error~$|\widehat\varphi_\mu(T)-\varphi_\mu^*(T)|$. The orange square depicts the control domain~$\omega=[0.2,0.8]^2$.}
  \label{fig:ball-unet-panels}
\end{figure}
\par
\Cref{fig:ball-unet-panels} shows three parameter settings illustrating different situations for the diffusivity interface and target position.
In all cases, the predicted final time adjoint is visually almost indistinguishable from the corresponding full-order solution.
The largest discrepancies remain localized near the sharp transitions generated by the moving target and the diffusivity interface, while the overall spatial structure is captured accurately.
\par
The second row corresponds to the most challenging setting among the displayed examples (and actually among all test parameters), where the moving target crosses the boundary of the control region.
As is shown by the large relative error, the prediction becomes less accurate near the interaction between these two features.
Nevertheless, the~U-net still reproduces the correct global shape of the adjoint and preserves the main localization patterns.
Overall, these examples indicate that the proposed approach (and its architecture) is able to capture the influence of both the parameter-dependent diffusivity and the moving target on the optimality system.
\par
While~\Cref{fig:ball-unet-panels} illustrates the quality of the predictions for some selected parameters, we next analyze the overall approximation performance over the entire test set and its dependence on the number of available training snapshots.
For the autoencoders in this example, the latent dimension was selected as~$N=64$ for~$n_\mathrm{train}=200$ and as~$N=8$ for~$n_\mathrm{train}\in\{100,500,1000\}$.
\begin{figure}[htbp]
    \centering
    \begin{tikzpicture}
        \begin{loglogaxis}[
            width=0.65\linewidth, height=0.45\linewidth,
            xlabel={$n_\mathrm{train}$}, ylabel={$\bar{\varepsilon}_\varphi$},
            legend pos=outer north east, legend cell align=left,
            grid=both, minor tick num=1,
            cycle list name=color list,
            ]
            \addplot[mark=*, thick, unet] table[x=n, y=unet_mean] {data/ball_interface_joint_square/scaling_curve.dat};
            \addlegendentry{U-Net}
            \addplot[mark=triangle*, thick, paramdec] table[x=n, y=param_dec_mean] {data/ball_interface_joint_square/scaling_curve.dat};
            \addlegendentry{Parametric decoder}
            \addplot[mark=square*, thick, cnnaerom] table[x=n, y=cnn_ae_rom_mean] {data/ball_interface_joint_square/scaling_curve.dat};
            \addlegendentry{CNN-AE-ROM (best dim.)}
            \addplot[mark=pentagon*, thick, rbrom] table[x=n, y=greedy_rb_rom_mean] {data/ball_interface_joint_square/scaling_curve.dat};
            \addlegendentry{RB-ROM}
            \addplot[mark=square*, thick, dashed, cnnae] table[x=n, y=cnn_ae_mean] {data/ball_interface_joint_square/scaling_curve.dat};
            \addlegendentry{CNN-AE (no ROM)}
            \addplot[mark=pentagon*, thick, dashed, orthproj] table[x=n, y=greedy_proj_mean] {data/ball_interface_joint_square/scaling_curve.dat};
            \addlegendentry{Orthogonal projection}
        \end{loglogaxis}
    \end{tikzpicture}
    \caption{Average relative error~$\bar{\varepsilon}_\varphi$ in the final time adjoint over~$100$ uniform randomly chosen test parameters depending on the training set size~$n_\mathrm{train}$ for the example with a moving ball target, diffusivity with a moving interface and local control domain.}
    \label{fig:ball_interface_joint_square-scaling}
\end{figure}
\par
The scaling behavior shown in~\Cref{fig:ball_interface_joint_square-scaling} is consistent with the observations from the previous example in~\Cref{ssec:HGC}, although the present problem is considerably more challenging due to the simultaneous presence of a moving target, a parameter-dependent diffusivity, and a localized control region.
The~U-Net consistently achieves the smallest approximation errors and exhibits a clear improvement as the number of training snapshots increases.
\par
The remaining surrogate models perform substantially worse.
The~RB-ROM and the orthogonal projection remain essentially insensitive to the size of the training set, reflecting, again, the limitations of linear approximation spaces for transport-dominated solution manifolds.
The autoencoder-based methods improve as additional training data becomes available, but the gain is considerably smaller than for the~U-Net.
This suggests that nonlinear latent representations alone are not sufficient to capture the increased complexity introduced by the parameter-dependent diffusion coefficient and the localized control and that a substantial amount of additional training data would be required to achieve acceptable approximation errors.
\par
The behavior observed in~\Cref{fig:ball_interface_joint_square-scaling} is quantified in~\Cref{tab:ball_interface_joint_square-summary}, which reports the average relative errors for the final time adjoint together with the corresponding errors in the recovered state and optimal control.
\begin{table}[htbp]
    \centering
    \begin{tabular}{llcccc}
        \toprule
        Method & Metric & $n_\mathrm{train}=100$ & $n_\mathrm{train}=200$ & $n_\mathrm{train}=500$ & $n_\mathrm{train}=1000$ \\
        \midrule\midrule
        U-Net & $\bar{\varepsilon}_\varphi$ & $5.71\cdot 10^{-2}$ & $3.28\cdot 10^{-2}$ & $1.73\cdot 10^{-2}$ & $1.19\cdot 10^{-2}$ \\
         & $\bar{\varepsilon}_y$ & $3.70\cdot 10^{-2}$ & $2.36\cdot 10^{-2}$ & $1.20\cdot 10^{-2}$ & $9.31\cdot 10^{-3}$ \\
         & $\bar{\varepsilon}_u$ & $5.53\cdot 10^{-2}$ & $3.15\cdot 10^{-2}$ & $1.64\cdot 10^{-2}$ & $1.14\cdot 10^{-2}$ \\
        \midrule
        Parametric decoder & $\bar{\varepsilon}_\varphi$ & $7.60\cdot 10^{-1}$ & $5.79\cdot 10^{-1}$ & $4.46\cdot 10^{-1}$ & $3.82\cdot 10^{-1}$ \\
         & $\bar{\varepsilon}_y$ & $1.55\cdot 10^{-1}$ & $1.14\cdot 10^{-1}$ & $7.86\cdot 10^{-2}$ & $6.39\cdot 10^{-2}$ \\
         & $\bar{\varepsilon}_u$ & $8.12\cdot 10^{-1}$ & $6.30\cdot 10^{-1}$ & $4.88\cdot 10^{-1}$ & $4.22\cdot 10^{-1}$ \\
        \midrule
        CNN-AE-ROM & $\bar{\varepsilon}_\varphi$ & $8.58\cdot 10^{-1}$ & $6.48\cdot 10^{-1}$ & $5.05\cdot 10^{-1}$ & $4.29\cdot 10^{-1}$ \\
         & $\bar{\varepsilon}_y$ & $1.48\cdot 10^{-1}$ & $1.11\cdot 10^{-1}$ & $7.45\cdot 10^{-2}$ & $6.47\cdot 10^{-2}$ \\
         & $\bar{\varepsilon}_u$ & $8.96\cdot 10^{-1}$ & $6.91\cdot 10^{-1}$ & $5.50\cdot 10^{-1}$ & $4.68\cdot 10^{-1}$ \\
        \midrule
        RB-ROM & $\bar{\varepsilon}_\varphi$ & $8.60\cdot 10^{-1}$ & $8.54\cdot 10^{-1}$ & $8.66\cdot 10^{-1}$ & $9.06\cdot 10^{-1}$ \\
         & $\bar{\varepsilon}_y$ & $3.66\cdot 10^{-2}$ & $3.52\cdot 10^{-2}$ & $3.64\cdot 10^{-2}$ & $3.80\cdot 10^{-2}$ \\
         & $\bar{\varepsilon}_u$ & $8.98\cdot 10^{-1}$ & $8.79\cdot 10^{-1}$ & $8.90\cdot 10^{-1}$ & $9.26\cdot 10^{-1}$ \\
        \midrule\midrule
        CNN-AE & $\bar{\varepsilon}_\varphi$ & $8.47\cdot 10^{-1}$ & $6.47\cdot 10^{-1}$ & $4.71\cdot 10^{-1}$ & $4.01\cdot 10^{-1}$ \\
         & $\bar{\varepsilon}_y$ & $1.42\cdot 10^{-1}$ & $1.04\cdot 10^{-1}$ & $9.30\cdot 10^{-2}$ & $6.75\cdot 10^{-2}$ \\
         & $\bar{\varepsilon}_u$ & $8.90\cdot 10^{-1}$ & $6.92\cdot 10^{-1}$ & $5.26\cdot 10^{-1}$ & $4.37\cdot 10^{-1}$ \\
        \midrule
        Orthogonal projection & $\bar{\varepsilon}_\varphi$ & $8.11\cdot 10^{-1}$ & $8.09\cdot 10^{-1}$ & $8.19\cdot 10^{-1}$ & $8.59\cdot 10^{-1}$ \\
         & $\bar{\varepsilon}_y$ & $1.05\cdot 10^{-1}$ & $1.03\cdot 10^{-1}$ & $1.06\cdot 10^{-1}$ & $1.06\cdot 10^{-1}$ \\
         & $\bar{\varepsilon}_u$ & $8.64\cdot 10^{-1}$ & $8.55\cdot 10^{-1}$ & $8.67\cdot 10^{-1}$ & $9.01\cdot 10^{-1}$ \\
        \bottomrule
    \end{tabular}
    \caption{Accuracy of the surrogates on the example with a moving ball target, diffusivity with a moving interface and local control domain.}
    \label{tab:ball_interface_joint_square-summary}
\end{table}
\par
\Cref{tab:ball_interface_joint_square-summary} confirms the superior performance of the~U-Net across all training set sizes.
For~$n_{\mathrm{train}}=1000$, the average relative error in the final time adjoint is reduced to~$1.19\cdot10^{-2}$, whereas the remaining surrogate models still exhibit errors between approximately~$4\cdot10^{-1}$ and~$9\cdot10^{-1}$.
Although the absolute errors are larger than in the previous benchmark, this is expected due to the additional complexity introduced by the discontinuous diffusivity and the localized control operator.
As in the previous example in~\Cref{ssec:HGC}, the state and control errors follow the same qualitative behavior as the adjoint errors.
\par
Besides approximation accuracy, computational efficiency is an essential criterion in multi-query optimal control.
We therefore conclude the comparison by reporting the offline training cost and the online evaluation time for each surrogate model.
\begin{table}[htbp]
    \centering
    \begin{tabular}{l|cccc|c}
        \toprule
        \multirow{2}{*}{Method} & \multicolumn{4}{c|}{Offline training times} & \multirow{2}{*}{Online times} \\
         & $n_\mathrm{train}=100$ & $n_\mathrm{train}=200$ & $n_\mathrm{train}=500$ & $n_\mathrm{train}=1000$ & \\
        \midrule\midrule
        U-Net & 26.2min & 33.2min & 36.1min & 1.2h & 0.69ms \\
        Parametric decoder & 4.8min & 7.9min & 17.3min & 30.8min & 0.15ms \\
        CNN-AE-ROM & 35.1min & 55.2min & 2.2h & 4.9h & 0.22ms \\
        RB-ROM & 3.6h & 5.5h & 10.9h & 20.6h & 16418.8ms \\
        \midrule
        \multicolumn{5}{r}{FOM solve (per query)} & 17141.0ms \\
        \multicolumn{5}{r}{State and control recovery} & 256.3ms \\
        \bottomrule
    \end{tabular}
    \caption{Per-method wall-clock costs on the example with a moving ball target, diffusivity with a moving interface and local control domain. The online times reported here refer to the average time to compute the approximate final time adjoint. The full online query costs also include the time to obtain the state and control from the final time adjoint (shown in the last row).}
    \label{tab:ball_interface_joint_square-runtime}
\end{table}
\par
The computational costs reported in~\Cref{tab:ball_interface_joint_square-runtime} closely resemble those observed in the previous example.
All neural network surrogates provide inference times below one millisecond, and even after including the computation of state and control, the machine learning-based approaches lead to speedups around~$67$ compared to the~FOM.
\par
The~RB-ROM requires substantially larger offline and online computational effort while delivering considerably lower approximation accuracy.
Consequently, for this more challenging benchmark, the~U-Net again provides the most favorable balance between computational costs and accuracy.
\paragraph{Main conclusions for the moving interface, localized control example.}
This example shows that the proposed~U-Net surrogate remains accurate even when several parameter-dependent mechanisms are present simultaneously, namely a moving target, a discontinuous diffusivity with moving interface, and a localized control region.
Compared to the previous benchmark, all surrogate models experience a loss of accuracy, showing the increased complexity of the solution manifold.
Nevertheless, the~U-Net clearly beats the competing approaches, indicating that learning the solution operator from the parameter-dependent fields provides a better and more expressive approximation than learning low-dimensional representations or parameter-to-solution maps.

\section{Conclusion and outlook}\label{sec:conclusion-outlook}
\paragraph{Problem statement and theoretical findings.}
This work was concerned with parameter-dependent optimal control problems.
We considered a specific class of problems for which the application of the~HUM method reduces the optimality system to a parameter-dependent linear system for the optimal final-time adjoint.
Solving this system for many different parameter values is prohibitively costly.
Therefore, in many-query or real-time scenarios, suitable surrogate models are required.
\par
A first objective of this paper was to understand the approximation properties of the manifold of parameter-dependent final time adjoints.
We showed that even for systems that present regularizing effects (as the heat equation), there are parameterizations for which the Kolmogorov~$N$-widths of the manifold decay slowly, for instance polynomially.
More precisely, for distributed control of the heat equation with a constant diffusivity field, we proved that the Kolmogorov~$N$-width decay of the manifold of parameter-dependent target states is transferred to the manifold of final time adjoints.
Consequently, whenever the target manifold exhibits a slow Kolmogorov~$N$-width decay, the same limitation is inherited by the corresponding family of final time adjoints.
\paragraph{Nonlinear surrogate models.}
The slow decay of the Kolmogorov width explains why linear reduced-order models require increasingly large reduced spaces to achieve accurate approximations.
To overcome this limitation, we proposed a nonlinear surrogate based on a~U-Net architecture that learns the mapping from parameter-dependent fields, such as target states or diffusivity fields, to the corresponding final time adjoint.
Since the surrogate approximates the final time adjoint itself, the residual-based a posteriori error estimator, originally proposed for reduced basis methods, remains applicable, providing an efficient certification of the approximation quality.
\paragraph{Numerical validation.}
We performed two numerical experiments in order to validate our approach and to compare it to other methods, such as reduced basis models, parametric decoders or autoencoder and parameter-to-latent surrogates.
The first example corresponds to the setting analyzed in the theoretical part of the paper, while the second one considers a more challenging problem involving a localized control region, a discontinuous parameter-dependent diffusion coefficient, and moving target states.
In both cases, the numerical results confirm the theoretical findings: linear reduced-order models suffer from the slow Kolmogorov width decay and require large reduced spaces to achieve a satisfactory accuracy.
Nonlinear approaches such as parametric decoders or the convolutional autoencoders with parameter-to-latent map perform slightly better in terms of accuracy than the reduced basis model but require a large amount of training data.
\par
Among all tested methods, the~U-Net consistently produced the most accurate approximations while requiring significantly fewer training samples.
In particular, using only~100 training snapshots, the U-Net achieved average final time adjoint errors more than one order of magnitude smaller than the competing approaches trained on~1000 snapshots.
Moreover, after the offline training phase, the online computational cost remained substantially smaller than that of the full-order model, yielding speedups of approximately~67 in the most challenging example.
\par
Summarizing, for the problems we considered in this work, the~U-Net approach seems preferable among the tested surrogates in terms of accuracy and efficient usage of training data.
\paragraph{Future directions.}
An important theoretical direction is the analysis of approximation properties of~U-Nets for parameter-dependent optimal control problems, both for the class of problems considered here and for more general settings.
Building upon the recent approximation results in~\cite{schuette2024multilevel}, it would be of particular interest to establish expressivity results for learning the operator mapping parameter-dependent fields to optimal final time adjoints. 
\par
From the numerical point of view, several extensions remain open.
The current architecture assumes rectangular computational domains and regular grids; more general geometries and discretizations may require graph-based neural network architectures such as~Graph~U-Nets~\cite{gao2021graph}.
Another natural extension concerns time-dependent problems.
Incorporating time as an additional network dimension through a~3D~U-Net, or combining the architecture with temporal compression or attention mechanisms, may allow us to deal with time-varying coefficients.
Furthermore, as discussed in~\Cref{sec:discussion-nonlinear-approach}, extending the methodology to parameter-dependent boundary conditions remains an open challenge.
\par
Finally, investigating the performance of the proposed~U-Net approach on the elliptic benchmark problems considered in~\cite{franco2023deep} constitutes a natural next step.
Since the approach based on~U-Nets is directly applicable in that setting, such a comparison would provide further insights into the advantages and limitations of field-based surrogates relative to autoencoder and parameter-to-latent approaches.

\section*{Statements and declarations}
\paragraph{Funding}
This article is based upon work from COST Action InterCoML, CA24136, supported by COST (European Cooperation in Science and Technology).
The authors (ML) and (JRM) were partially supported by the Croatian Science Foundation under the project Conduction \textit{``Optimal control and model reduction for evolution and data-driven problems''} IP-2022-10-5191 and by the European Union -- NextGenerationEU through the institutional project TRIUMPh at the University of Dubrovnik. Additionally, (JRM) was also co-supported by the FONDECYT project 3260141 \textit{``Control and long-time behavior for dispersive equations''}.

\paragraph{Competing interests}
The authors have no relevant financial or non-financial interests to disclose.

\paragraph{Code availability}
The source code used to perform the experiments shown in this paper is available in~\cite{sourcecode}.

\printbibliography

\end{document}